\DeclareMathAlphabet{\pazocal}{OMS}{zplm}{m}{n} %allows pazocal lettering
\definecolor{blue}{rgb}{0,0,1}
\definecolor{red}{rgb}{1,0,.2}
\theoremstyle{plain}
\newtheorem{thm}{Theorem}[section]
\newtheorem{lem}[thm]{Lemma}
\newtheorem{prop}[thm]{Proposition}
\newtheorem{defn}[thm]{Definition}
\newtheorem{remark}[thm]{Remark}
\newtheorem{note}[thm]{Note}
\numberwithin{equation}{section}
\newcommand{\R}{\ensuremath{\mathbb{R}}}
\newcommand{\Z}{\ensuremath{\mathbb{Z}}}
\providecommand{\emp}{\varnothing}
\newcommand{\de}{\delta}
\newcommand{\eps}{\epsilon}
\author[Alex McDonald]{Alex McDonald }
\address{Alex McDonald, Department of Mathematics, The Ohio State University}
\email{mcdonald.996@osu.edu}
\author[Krystal Taylor]{Krystal Taylor}
\address{Krystal Taylor, Department of Mathematics, The Ohio State University}
\email{taylor.2952@osu.edu}
\thanks{Taylor is supported in part by the Simons Foundation Grant 523555.}
\title{\parbox{14cm}{\centering{ 
Finite Point configurations in Products of Thick Cantor sets and a Robust Nonlinear Newhouse Gap Lemma
}}}
\begin{document}

\maketitle
\begin{abstract}
In this paper we prove that the set of tuples of edge lengths in $K_1\times K_2$ corresponding to a finite tree has non-empty interior, where $K_1,K_2\subset \R$ are Cantor sets of thickness $\tau(K_1)\cdot \tau(K_2) >1$.  
Our method relies on establishing that the pinned distance set is robust to small perturbations of the pin.
In the process, we prove a nonlinear version of the classic Newhouse gap lemma, and show that if $K_1,K_2$ are as above and $\phi: \R^2\times \R^2 \rightarrow \R $ is a function satisfying some mild assumptions on its derivatives, then there exists an open set $S$ so that $\bigcap_{x \in S} \phi(x,K_1\times K_2)$ has non-empty interior.  
%Finally, if $C_{1/3}$ denotes the standard middle thirds Cantor set, we show that for any finite tree $T$, the set $\Delta_{T}(C_{1/3}\times C_{1/3})$ has non-empty interior. 
\end{abstract}  
\maketitle
%\tableofcontents

\section{Introduction} 
It is a simple consequence of the Lebesgue density theorem that subsets of $\R^n$ of positive Lebesgue measure contain  
a translated and scaled copy of every finite point set for an interval worth of scalings \cite{Steinhaus20}.  
Under more general assumptions on $E$, 
%In the case that $E$ has measure zero, 
a problem of great current interest is that of describing the set of configurations that exists within $E$.  This includes 
finding conditions on the structure or size of $E$ that guarantee the existence of various patterns within $E$,
see, for instance, \cite{Bourgain86, CLP,  FKW90, IosLiu, IosMag, Krause, FY21, Ziegler06}, 
as well as the more quantitative question of describing the size of the set of similar copies of a given configuration \cite{GGIP, GIP, GIT21, GIT19, M21}.   
We focus on the latter question.  
  A particularly simple setting involves two point configurations and distances, which we describe using the following notation.

\begin{defn}[Distance sets]
Given a set $E\subset \R^d$, define the distance set of $E$ to be the set
\[
\Delta(E)=\{|x-y|:x,y\in E\}.
\]
For $x\in \R^d$, define the \textbf{pinned distance set} of $E$ at $x$ to be
\[
\Delta_x(E)=\{|x-y|:y\in E\}.
\]
\end{defn}

A classic problem in harmonic analysis and geometric measure theory is the Falconer distance problem, which asks how large the Hausdorff dimension of a set $E\subset \R^d$ must be to ensure that $\Delta(E)$ has positive Lebesgue measure \cite{Fal85}, or more generally non-empty interior. 
Falconer proved that $\dim_{\rm{H}}(E) >\frac{d}{2}$ is necessary and $\dim_{\rm{H}}(E) >\frac{d+1}{2}$ suffices for positive measure.   
The best known result in the plane improves this threshold to $\dim_{\rm{H}}(E) >\frac{5}{4}$ \cite{GIOW}.  In the non-empty interior direction, a result of Mattila and Sj\"olin shows that $\Delta(E)$ has non-empty interior provided that $\dim_{\rm{H}}(E)>\frac{d+1}{2}$; for a simple Fourier analytic proof of this fact, see also \cite{IMT12}. The first known results on the interior of pinned distance sets appeared in \cite[Corollary 2.12 \& Theorem 2.15]{STinterior}.  
\\

%\subsection{Distances over graphs}
To pose questions about more complex patterns, the language of graph theory is useful.  We pause to state some basic definitions.

\begin{defn}[Graphs]
A (finite) \textbf{graph} is a pair $G=(V,E)$, where $V$ is a (finite) set and $E$ is a set of $2$-element subsets of $V$.  If $\{i,j\}\in E$ we say $i$ and $j$ are \textbf{adjacent} and write $i\sim j$.
\end{defn} 

Throughout this paper, we will always consider our vertex set to be $\{1,\dots,k+1\}$.  We are particularly interested in the following types of graphs.

 \begin{defn}[Chain and tree graphs]
The \textbf{$k$-chain} is the graph on vertex set $\{1,\dots,k+1\}$ with $i\sim j$ if and only if $|i-j|=1$.  A \textbf{tree} is a connected, acyclic graph; equivalently, a tree is a graph in which any two vertices are connected by exactly one path.  If $T$ is a tree, the \textbf{leaves} of $T$ are the vertices which are adjacent to exactly one other vertex of $T$.
  \end{defn} 
  
Note that all chains are trees.  We record the basic structural properties of trees as a proposition.

\begin{prop}[Tree structure]
\label{treestructure}
If $T$ is a tree with $k+1$ vertices, then $T$ has $k$ edges.  Moreover,  given such a tree $T$ there is a sequence of trees $T_1,...,T_k,T_{k+1}$ such that $T_1=T$, $T_{k+1}$ consists of only one vertex, and each $T_{i+1}$ is obtained from $T_i$ by removing one leaf and its corresponding edge.
\end{prop}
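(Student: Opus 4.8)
The plan is to prove both assertions simultaneously by induction on $k$, the one genuine ingredient being the classical fact that every finite tree with at least two vertices has a leaf.

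First I would establish the leaf lemma. Since $T$ is finite, there are only finitely many paths in $T$, so I may choose a path $v_0 \sim v_1 \sim \cdots \sim v_m$ of maximal length; because $T$ is connected and has at least two vertices, this path has $m \geq 1$. I claim $v_0$ is a leaf. If $v_0$ were adjacent to some $w \neq v_1$, then either $w \notin \{v_0,\dots,v_m\}$, and $w \sim v_0 \sim \cdots \sim v_m$ is a strictly longer path, contradicting maximality; or $w = v_j$ for some $j \geq 2$, and $v_0 \sim v_1 \sim \cdots \sim v_j \sim v_0$ is a cycle, contradicting that $T$ is acyclic. Hence $v_0$ is adjacent to $v_1$ only, i.e.\ it is a leaf (and symmetrically so is $v_m$).

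Next I would record that deletion of a leaf is well behaved: if $v$ is a leaf of a tree $T=(V,E)$ with unique incident edge $e=\{v,u\}$, then $T':=(V\setminus\{v\},\,E\setminus\{e\})$ is again a tree. Acyclicity is immediate since any cycle in $T'$ is already a cycle in $T$. For connectedness, take $x,y \in V\setminus\{v\}$ and the (unique) path joining them in $T$; this path cannot pass through $v$, because an interior vertex of a path has two distinct neighbours on the path while $v$ has degree $1$, and $v$ is not an endpoint of the path since $x,y\neq v$. Thus the path lies entirely in $T'$, so $T'$ is connected.

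Finally, the induction. When $k=0$ the tree is a single vertex with no edges and the trivial sequence $T_1$, so both claims hold. For the inductive step, let $T$ be a tree on $k+1\geq 2$ vertices; by the leaf lemma it has a leaf $v$ with incident edge $e$, and by the deletion fact $T_2:=(V\setminus\{v\},E\setminus\{e\})$ is a tree on $k$ vertices. The inductive hypothesis gives that $T_2$ has $k-1$ edges and admits a sequence $T_2,T_3,\dots,T_{k+1}$ with $T_{k+1}$ a single vertex and each $T_{i+1}$ obtained from $T_i$ by deleting a leaf and its edge; setting $T_1:=T$ extends this to the desired sequence $T_1,T_2,\dots,T_{k+1}$, since $T_2$ was obtained from $T_1$ in exactly this way. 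Moreover $E = (E\setminus\{e\}) \cup \{e\}$ with $e$ not an edge of $T_2$, so $T$ has $(k-1)+1=k$ edges, completing the induction. I do not anticipate a real obstacle here: the only nontrivial step is the existence of a leaf, which the maximal-path argument handles cleanly; the rest is bookkeeping, and one should simply be careful that the two clauses of the proposition are carried through the induction together.
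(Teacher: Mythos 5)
Your proof is correct and complete: the maximal-path argument for the existence of a leaf, the verification that deleting a leaf preserves connectedness and acyclicity, and the joint induction on $k$ are all carried out soundly. The paper itself records this proposition as a standard fact and gives no proof, so there is nothing to compare against; your argument is the canonical one and fills that gap correctly.
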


\begin{defn}[$G$ distance sets]\label{Gdist_defn}
Let $G$ be a graph on the vertex set $\{1,\dots,k+1\}$ with $m$ edges, and let $\sim$ denote the adjacency relation on $G$.  
Define the \textbf{$G$ distance set} of $E$ to be
\[
\Delta_G(E)=\{(|x^i-x^j|)_{i\sim j}:x^1,...,x^{k+1}\in E, x^i\neq x^j\},
\]
where $(a_{i,j})_{i\sim j}$ denotes a vector in $\R^m$ with coordinates indexed by the edges of $G$.
 \end{defn}

If $G$ is the $1$-chain, the set $\Delta_G(E)$ essentially coincides with the set $\Delta(E)$ (our definition of $G$ distance sets excludes degenerate configurations, so really $\Delta(E)=\Delta_G(E)\cup \{0\}$ for this choice of $G$).  
\\

With these definitions in place, the question becomes: what structural conditions on $E$ are needed to ensure $\Delta_G(E)$ has positive measure/non-empty interior?  
When $G$ is a $k$-chain, the problem was studied by Bennett, Iosevich, and the second author of this paper in \cite{BIT16}.  They prove that if the Hausdorff dimension of $E$ is greater than $\frac{d+1}{2}$, then $\Delta_G(E)$ has non-empty interior.  
Moreover, there is an interval $I\subset\R$ such that $I^k \subset \Delta_G(E)$, where $I^k$ denotes the $k$-fold Cartesian product of $I$.  
This result was later generalized by Iosevich and the second author to the case when $G$ is a tree \cite{ITtrees}.  
For additional progress on relating the Hausdorff dimension of a set to the interior of the set of configurations that it contains, see \cite{GIT21, GIT19}.
\\

In \cite{OuT20}, Ou and the second listed author show that when $T$ is a tree and $\dim_{\rm H}(E)>5/4$ for $E\subset \R^2$, then $\Delta_T(E)$ has positive Lebesgue measure.  However, when $\dim_H(E)<3/2$ it is not known whether $\Delta_T(E)$ has non-empty interior. 
We make progress on this question by replacing the Hausdorff dimension with an alternate notion of structure, namely that of Newhouse thickness.  Our precise definitions are as follows.

\begin{defn}[Cantor sets]
A \textbf{Cantor set} is a subset of $\R^d$ which is compact, perfect, and totally disconnected.
\end{defn}
When $K\subset \R$ is a Cantor set, we have the following notion of structure (also see see \cite{PTbook}). 
\begin{defn}[Thickness]
A \textbf{gap} of a Cantor set $K\subset \R$ is a connected component of the compliment $\R\setminus K$.  If $u$ is the right endpoint of a bounded gap $G$, for $b\in \R\cup \{\infty\}$, let $(a,b)$ be the closest gap to $G$ with the property that $u<a$ and $|G|\le b-a$.  The interval $(u,a)$ is called the \textbf{bridge} at $u$ and is denoted $B(u)$.  Analogous definitions are made when $u$ is a left endpoint.  The \textbf{thickness} of $K$ at $u$ is the quantity
\[
\tau(K,u):=\frac{|B(u)|}{|G|}.
\]
Finally, the thickness of the Cantor set $K$ is the quantity
\[
\tau(K):=\inf_u \tau(K,u),
\]
the infimum being taken over all gap endpoints $u$.
\end{defn}

Our goal is to prove Falconer type results for sets in the plane of the form $K\times K$, where $K$ is a Cantor satisfying $\tau(K)>1$.  
Before stating our results, we comment on the relationship between thickness and Hausdorff dimension.
One can easily construct a Cantor set $K$ with arbitrarily small thickness and Hausdorff dimension arbitrarily close to 1.  
This is due to the fact that thickness is defined using an infimum, so one can construct a thin Cantor set by simply ensuring one bridge is much smaller than the corresponding gap.  
More precisely, for any $\de>0$ and $1< N< \de^{-1}$ we can construct $K$ as a subset of $[0,\de]\cup[N\de,1]$.  
It is clear that Cantor sets of this form can attain any Hausdorff dimension in $[0,1]$. 
Considering the gap $(\de, N\de)$ and corresponding bridge $[0,\de]$, we conclude that $\tau(K)\leq \frac{1}{N-1}$.   \\

On the other hand, large thickness implies large Hausdorff dimension.  Specifically, one can prove the bound (see \cite[pg. 77]{PTbook}):
$$\dim_{\rm H}(K) \geq \frac{\log{2}}{\log{\left( 2 + \frac{1}{\tau(K)} \right) }}.$$
In particular, when $K$ is a Cantor set of thickness $\tau(K)>1$ then we have
\[
\dim_{\rm H}(K\times K)> \frac{2\cdot\log 2}{\log 3}\approx 1.26
\]

\subsection{\textit{Main results}}
 Recall that if $E\subset \R^2$ and $\dim_{\rm H}(E)>\frac{5}{4}$, then $\Delta_T(E)$ has positive Lebesgue measure. 
 However, when $\dim_H(E)<\frac{3}{2}$ it is not known whether $\Delta_T(E)$ has non-empty interior.
 Our first result is as follows.

\begin{thm}[Interior of tree distance sets]
\label{distancetrees}
%Let $K$ be a Cantor set satisfying $\tau(K)> 1$.
Let $K_1,K_2$ be Cantor sets satisfying $\tau(K_1)\cdot \tau(K_2) > 1$
 For any finite tree $T$, the set $\Delta_T(K_1\times K_2)$ has non-empty interior.
\end{thm}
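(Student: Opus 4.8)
The plan is to induct on the number of vertices of $T$, using Proposition~\ref{treestructure} to peel off one leaf at a time, so that the key analytic input is the single-edge (pinned distance) case together with a mechanism for producing a \emph{stable} pin for the next stage. First I would establish the base case: when $T$ is a single edge, $\Delta_T(K_1\times K_2) = \{|x-y| : x,y \in K_1\times K_2,\ x\neq y\}$, and I would show this contains an interval. The natural route is the nonlinear Newhouse gap lemma advertised in the abstract applied to $\phi(x,y) = |x-y|^2$ (or to each pin $x$ separately, yielding $\Delta_x(K_1\times K_2)$ with interior); checking the hypotheses amounts to verifying that for a fixed $x \notin K_1\times K_2$, the level sets of $y\mapsto |x-y|$ meet $K_1\times K_2$ transversally in a way compatible with the thickness condition $\tau(K_1)\tau(K_2)>1$ — this is where the product-of-thicknesses hypothesis enters, via a slicing argument reducing a circle arc intersected with $K_1\times K_2$ to a one-dimensional thick Cantor set intersection governed by the classical Newhouse lemma.

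For the inductive step, suppose the result holds for all trees with at most $k$ vertices, and let $T$ have $k+1$ vertices. By Proposition~\ref{treestructure}, write $T$ as $T'$ with one extra leaf $v$ attached by an edge to some vertex $w \in T'$. Given a target tuple $(t_e)_{e\in T}$ with all $t_e$ in a suitable common small interval $I$, I would first use the inductive hypothesis to realize the sub-tuple $(t_e)_{e \in T'}$ by points $x^i \in K_1\times K_2$; the crucial point is that the inductive hypothesis must be upgraded to assert not merely non-empty interior but that an entire product $I^{k-1}$ of a fixed interval $I$ is contained in $\Delta_{T'}(K_1\times K_2)$, and moreover that the realizing configuration can be chosen with the pin $x^w$ ranging over an open subset $S \subset K_1\times K_2$ — i.e.\ the realization is \emph{robust to perturbation of $x^w$}. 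This is exactly the ``pinned distance set is robust to small perturbations of the pin'' principle highlighted in the introduction. With such a robust family in hand, for each choice of $x^w$ in the open set $S$ the remaining requirement is $|x^w - x^v| = t_{\{v,w\}}$ with $x^v \in K_1\times K_2$, i.e.\ $t_{\{v,w\}} \in \bigcap_{x^w \in S}\Delta_{x^w}(K_1\times K_2)$; the strengthened single-edge statement (the nonlinear robust Newhouse lemma, giving $\bigcap_{x\in S}\phi(x,K_1\times K_2)$ nonempty interior) supplies an interval $I'$ of valid values, and shrinking $I$ to $I\cap I'$ closes the induction.

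The main obstacle, and the place where the genuinely new work lies, is establishing the \emph{robust} (uniform-in-pin) version of the pinned distance / nonlinear Newhouse statement, rather than the pointwise one. The classical Newhouse gap lemma guarantees that two thick Cantor sets that are ``linked'' must intersect, but here one needs the intersection to persist — and to persist with an interval's worth of attained distances — uniformly as the center of the circles (or the point in the first argument of $\phi$) varies over an open set. I expect this to require: (i) showing the thickness of the relevant one-dimensional slices of $K_1\times K_2$ along circular arcs is bounded below uniformly for pins in a compact region away from the set, which in turn rests on the curvature bounds on $\phi$ and a quantitative version of how thickness behaves under $C^2$ diffeomorphisms (a nonlinear change of variables sending arcs to near-horizontal lines distorts gaps and bridges by a controlled amount, so $\tau$ degrades continuously); and (ii) combining this with the linking configuration being open, so that the Newhouse conclusion holds on a whole neighborhood. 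Once (i)–(ii) are in place, extracting an interval of distances (not just nonemptiness of the intersection) follows by noting the distance function restricted to the arc is monotone and the Newhouse intersection is nonempty for every target in a range, then intersecting these ranges over $x \in S$.
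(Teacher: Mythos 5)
Your proposal follows essentially the same route as the paper: peel leaves off $T$ one at a time via Proposition~\ref{treestructure} (the paper packages this induction as a ``tree building mechanism,'' Theorem~\ref{mechanism}), with the key analytic input being a robust pin-wiggling lemma asserting that $\bigcap_{x\in S}\Delta_x(K_1\times K_2)$ has non-empty interior for some open neighborhood $S$ of a chosen pin. That lemma is proved exactly as you outline in (i)--(ii): write the circle $|x-y|=t$ as a graph $y_2=g_{x,t}(y_1)$, show that thickness degrades only slightly under a $C^1$ map restricted to a short enough interval (the paper's Lemma~\ref{imagethickness}, via an ``$\eps$-thickness''), apply the Newhouse gap lemma to $\widetilde{K_2}$ and $g_{x,t}(\widetilde{K_1})$, and note that the linking hypothesis is an open condition in $(x,t)$.

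One caution about your inductive bookkeeping. You strengthen the hypothesis to ``$I^{k-1}\subset\Delta_{T'}(K_1\times K_2)$ for a fixed interval $I$.'' That cube statement is neither needed nor delivered by this method: the paper's remark following Theorem~\ref{mechanism} explicitly identifies finding a single interval $I$ with $I^k\subset\Delta_T(K\times K)$ as an open problem (it would require a skeleton whose adjacent distances all coincide, which thickness alone does not provide). What the method yields, and all the induction requires, is a product $I_1\times\cdots\times I_{k-1}$ of possibly different intervals. Relatedly, the open set $S$ of admissible pins $x^w$ must be uniform over the tuples being realized, or else the interval $I'$ for the new edge varies with the tuple and no product survives; the paper arranges this by fixing a skeleton $x^1,\dots,x^{k+1}$ in advance, working in small boxes around it, and producing the interval for the peeled leaf \emph{before} recursing on the subtree, rather than realizing the subtree first and then attaching the leaf. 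With those two adjustments your argument coincides with the paper's.
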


 We note that Theorem \ref{distancetrees} is an extension of the work in \cite{STinterior} of Simon and the second listed author, where it is shown $\Delta_x(K\times K)$ has non-empty interior provided that $K$ is a Cantor set satisfying $\tau(K)> 1$.  
 Theorem \ref{distancetrees} also holds if the Euclidean norm is replaced with more general norms; see Theorem \ref{phi_distancetrees} below.\\

Beyond trees, 
the existence of patterns in thick subsets of $\R^d$ with rigid structure was investigated in \cite{Yav20} when $d=1$, and in \cite{FY21} when $d\geq 1$.  
In \cite{Yav20}, it is shown that, given any compact set $C$ in $\mathbb{R}$ with thickness
$\tau$, there is an explicit number $N(\tau)$ such that $C$ contains a
translate of all sufficiently small 
similar copies of every finite set in $\mathbb{R}$ with at most $N(\tau)$
elements.
Higher dimensional analogues of these results are subsequently given in \cite{FY21}. 
The only drawback is that the theorems in \cite{Yav20, FY21} assume very large thickness; moreover, the threshold depends on the size of the configuration one wants to find. 
For instance, in order to ensure $N(\tau)\geq 3$, one needs $\tau$ at least on the order of $10^9$. 
 In contrast, our results for trees apply to any Cantor sets of thickness greater than $1$, regardless of how large the tree is.  
\\

Another Falconer type problem which has received much attention is obtained by replacing the Euclidean distance with other geometric quantities, notably dot products.  
We make the following definition.

\begin{defn}[Dot product sets]
Given $E\subset\R^d$, the \textbf{dot product set} of $E$ is the set
\[
\Pi(E)=\{x\cdot y: x,y\in E\}.
\]
We also consider the \textbf{pinned dot product set}
\[
\Pi_x(E)=\{x\cdot y:y\in E\}.
\]
Finally, given a graph $G$ on vertices $\{1,...,k+1\}$, define
$$\Pi_{G}(E)=\{(x^i\cdot x^j)_{i\sim j}:x^1,...,x^{k+1}\in E\}.$$
\end{defn}

When $E\subset \R^d$ is a set of sufficient Hausdorff dimension, the dot product set is treated in \cite[Theorem 1.8]{EIT11}.
In particular, it is shown there that if $\dim_{\rm H}(E) >\frac{d+1}{2}$, then $\Pi(E)$ has positive measure.  The related set $\{x^\perp\cdot y:x,y\in E\}$, where $x^\perp=(-x_2,x_1)$ when $d=2$, is the set of (signed) areas of parallelograms spanned by points of $E$.  Similar to the above definition, for any graph $G$ one can consider the vector which encodes all areas determined by points $x^i,x^j$ such that $i\sim j$.  This problem was investigated by the first author in \cite{M21} in the case where $G$ is a complete graph, and the analogous problem in higher dimensions was studied by the first author and Galo in \cite{GM21}. \\
  
In the setting where one is considering sets $E$ with large Hausdorff dimension, the proofs of distance and dot product results are generally similar in complexity.  However, in the setting where $E= K\times K$, where $K$ is a sufficiently thick Cantor set, the dot product problem is considerably more straightforward than to the distance problem.  We nevertheless record the result here and provide its proof in Section \ref{KeyLemmas} as a demonstration of how our techniques vary in these two regimes.

\begin{thm}[Interior of tree dot product sets]
\label{dotproducttrees}
Let $K$ be a Cantor set satisfying $\tau(K)\geq 1$.  For any finite tree $T$, the set $\Pi_{T}(K\times K)$ has non-empty interior.
\end{thm}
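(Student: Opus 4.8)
The plan is to reduce to the case $K\subseteq(0,\infty)$, exploit the fact that for a pin with positive coordinates the pinned dot product set of $K\times K$ is an \emph{exact} interval, and then run an induction over the tree in which each newly attached edge sweeps out a fixed subinterval. First I would reduce to $K\subseteq(0,\infty)$ (the case $K\subseteq(-\infty,0)$ being handled by reflection): since $\Pi_T(K'\times K')\subseteq\Pi_T(K\times K)$ for any Cantor subset $K'\subseteq K$, it suffices to find such a $K'$ with $\tau(K')\geq1$ and $K'\subseteq(0,\infty)$, which removes the obstruction caused by points of $K$ having a zero coordinate. Producing $K'$ is the one delicate point of the reduction — cutting a Cantor set along one of its gaps can lower the thickness near the cut — so one must cut along suitably chosen gaps; I would isolate this as a short lemma (or cite the relevant fact about thickness of clopen subpieces). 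Henceforth write $0<m:=\min K$ and $M:=\max K$.

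For $z=(z_1,z_2)$ with $z_1,z_2>0$ we have $\Pi_z(K\times K)=z_1K+z_2K$, and since $\tau(z_1K)\cdot\tau(z_2K)=\tau(K)^2\geq1$, the sharp version of the Newhouse gap lemma for sums — Astels' theorem that a sum of two Cantor sets whose thicknesses multiply to at least $1$ equals the full sum of their convex hulls — gives the \emph{exact} identity $\Pi_z(K\times K)=[(z_1+z_2)m,\ (z_1+z_2)M]$. In particular $z_1K+z_2K$ is literally an interval whose endpoints are explicit functions of $z$. It is this exactness that makes the dot product problem so much softer than the distance problem: one never needs a nonlinear gap lemma or any perturbation analysis here.

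Now fix $\alpha\in K$ with $m<\alpha<M$. I would prove, by induction on $|V(T)|$ (using the leaf-peeling of Proposition \ref{treestructure}, or equivalently by recursing on the subtrees hanging off a chosen root), the following statement: for every finite rooted tree $(T,r)$ and every compact interval $[\sigma_1,\sigma_2]\subseteq(2m,2M)$ there is a nonempty open box $B\subseteq\R^{E(T)}$ such that for all $z\in K\times K$ with $z_1+z_2\in[\sigma_1,\sigma_2]$ and all $(t_e)_{e}\in B$ there exist $x^i\in K\times K$ with $x^r=z$ and $x^i\cdot x^j=t_e$ for every edge $e=\{i,j\}$; applying this with any $[\sigma_1,\sigma_2]\ni2\alpha$ and $z=(\alpha,\alpha)$ then exhibits $B\subseteq\Pi_T(K\times K)$. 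In the inductive step one lets $r$ have children $c_1,\dots,c_\ell$ with subtrees $T_i$ rooted at $c_i$, sets $x^r=z$, and notes that by the exact identity the value $z\cdot x^{c_i}$ fills $[(z_1+z_2)m,(z_1+z_2)M]\supseteq[\sigma_2m,\sigma_1M]$ as $x^{c_i}$ ranges over $K\times K$, this last interval being nondegenerate precisely because $\sigma_1>2m$ and $\sigma_2<2M$. One then picks $J_i\subseteq(\sigma_2m,\sigma_1M)$ compact and observes the crucial point: any $x^{c_i}\in K\times K$ with $z\cdot x^{c_i}\in J_i$ automatically has $x^{c_i}_1+x^{c_i}_2$ confined to a compact interval $[\sigma_1^{(i)},\sigma_2^{(i)}]\subseteq(2m,2M)$ depending only on $J_i,\sigma_1,\sigma_2$ (estimate $z\cdot x^{c_i}$ between $\min(x^{c_i}_1,x^{c_i}_2)(z_1+z_2)$ and $\max(x^{c_i}_1,x^{c_i}_2)(z_1+z_2)$). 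This is exactly what lets the induction hypothesis be applied to $(T_i,c_i)$ with interval $[\sigma_1^{(i)},\sigma_2^{(i)}]$, yielding boxes $B_i\subseteq\R^{E(T_i)}$; then $B:=\prod_i\operatorname{int}(J_i)\times\prod_iB_i$ works, since the subtree configurations are realized on vertex-disjoint sets and share only the already-placed points $x^{c_i}$.

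The step I expect to be the main obstacle is the uniformity in the last paragraph: one must produce a \emph{single} box, so the interval assigned to each new edge has to be chosen before one knows where the parent vertex of that edge will land. This forces one to propagate down the tree the invariant that every branching vertex $z$ has $z_1+z_2$ lying in a compact subinterval of $(2m,2M)$ — that is, branching points stay bounded away from the two extreme corners of $K\times K$. Once this "stay in the interior" condition is set up correctly, everything else follows from the exact interval formula and elementary bookkeeping; the only other point requiring genuine care is the thickness-preserving subset used in the reduction.
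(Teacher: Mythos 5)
Your overall architecture (restrict to a positive, thickness-$\geq 1$ piece of $K$, then induct over a rooted tree while propagating an ``interior'' constraint on $x_1+x_2$ at each branching vertex) is a legitimate alternative to the paper's leaf-peeling mechanism, but the engine driving it is wrong. The ``exact identity'' $z_1K+z_2K=[(z_1+z_2)m,(z_1+z_2)M]$ is false in general for $\tau(K)\geq1$, and no theorem of Astels or Newhouse asserts it without an additional hypothesis ruling out one summand being swallowed by a gap of the other. Concretely, let $K=(C_{1/3}+1)\cup(C_{1/3}+3)\subset[1,4]$, so that $\tau(K)=1$, $m=1$, $M=4$, and take $z=(1,4)\in K\times K$. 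The set $4K$ has a gap $(8,12)$ of length $4$, while $t-1\cdot K$ has diameter $3$; for every $t\in(12,13)$ the translate $t-K$ lies entirely inside that gap, so $(t-K)\cap 4K=\emp$ and $t\notin z_1K+z_2K$, even though $t\in[5,20]=[(z_1+z_2)m,(z_1+z_2)M]$. What the Gap Lemma actually yields (and what the paper's Lemma \ref{dotproductkeylemma} records) is only the ``linked'' subinterval, which for $z_1\geq z_2$ is
\[
\bigl(z_1M+z_2m,\ (z_1+z_2)M\bigr),
\]
of length $\ell\min(z_1,z_2)$, sitting at the top of the putative range and with position depending on which coordinate of $z$ is larger.

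This breaks the induction as you state it in two places. First, the containment $z_1K+z_2K\supseteq[\sigma_2 m,\sigma_1M]$ and the ensuing choice $J_i\subseteq(\sigma_2m,\sigma_1M)$ are unjustified. Second, replacing the full interval by the linked interval, the common interval over all pins $z$ with $z_1+z_2\in[\sigma_1,\sigma_2]$ is nonempty only when $\sigma_2M-m\ell<\sigma_1M$, i.e.\ $\sigma_2-\sigma_1<m\ell/M$; your induction hypothesis permits an arbitrary compact $[\sigma_1,\sigma_2]\subset(2m,2M)$, for which the inductive step fails. The argument is repairable: strengthen the invariant to ``$\sigma_2-\sigma_1$ sufficiently small,'' use the linked interval shrunk away from its right endpoint so that the children stay off the corner $(M,M)$, and note that the $J_i$ may be taken as short as desired. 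But after these repairs you have essentially re-derived the paper's pin-wiggling lemma, which is precisely the uniform-over-a-neighborhood-of-pins statement your ``single box'' requirement needs. Separately, the reduction to $K\subseteq(0,\infty)$ is an unproven step (truncating a Cantor set at a gap can lower its thickness, since bridges get cut off by the new unbounded gap); the paper avoids it entirely by choosing the skeleton off the coordinate axes rather than modifying $K$.
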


Our next main theorem concerns the standard middle thirds Cantor set, which we will denote $C_{1/3}$ throughout this paper.  Note that Theorem \ref{distancetrees} does not apply to $C_{1/3}$, as the hypothesis of that theorem is $\tau(K)>1$ and clearly $\tau(C_{1/3})=1$.  While we do not expect that Theorem \ref{distancetrees} can be extended to the $\tau(K)=1$ case in general, this weaker thickness condition together with the self similarity of $C_{1/3}$ allow us to modify the proof in that case.  The result is as follows.
\begin{thm}[Interior of $T$ distance sets in the middle third Cantor set]
\label{trees_thrm_middle_third}
For any finite tree $T$, the set $\Delta_{T}(C_{1/3}\times C_{1/3})$ has non-empty interior.  
\end{thm}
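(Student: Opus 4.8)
The plan is to run the proof of Theorem~\ref{distancetrees} essentially verbatim, isolating the single step where the hypothesis $\tau(K_1)\cdot\tau(K_2)>1$ is genuinely used and replacing it, for $C_{1/3}$, by an argument that survives at the threshold value $1$.

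First I would reduce the tree statement to a \emph{perturbation-robust pinned statement}, exactly as for Theorem~\ref{distancetrees}. Using the leaf-peeling of Proposition~\ref{treestructure} one builds a realization of $T$ by placing the $k+1$ vertices one at a time: at each step a new vertex $x^{\ell}\in C_{1/3}\times C_{1/3}$ must be put at a prescribed distance $t$ from an already-placed vertex $x^{j}$, and $x^{j}$ is handed to us by the previous step rather than being at our disposal. Hence it is enough to produce an open set $S\subset\R^{2}$ and a nonempty open interval $I$ with
\[
I\ \subseteq\ \bigcap_{x\in S}\Delta_{x}\!\left(C_{1/3}\times C_{1/3}\right),
\]
together with the bookkeeping — the same as in \cite{BIT16,ITtrees} and in the proof of Theorem~\ref{distancetrees} — that lets us force each inductively placed vertex into a small open neighbourhood meeting $C_{1/3}\times C_{1/3}$, so that every successive pin $x^{j}$ lands inside the relevant $S$. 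None of this uses that the thickness product strictly exceeds $1$; that hypothesis enters only in proving the displayed pinned statement, which in the setting of Theorem~\ref{distancetrees} is the content of the robust nonlinear Newhouse gap lemma applied to $\phi(x,y)=|x-y|$.

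Thus the whole theorem reduces to the pinned statement for $E=C_{1/3}\times C_{1/3}$. For a pin $x$ and radius $t$, the condition $t\in\Delta_{x}(E)$ says that the circle $\partial B(x,t)$ meets $E$; writing a suitable arc as a graph $y_{2}=g_{x,t}(y_{1})$ by the implicit function theorem, this becomes $g_{x,t}(C_{1/3})\cap C_{1/3}\neq\emptyset$, an intersection of two one-dimensional Cantor sets, the second a smooth image of the first. In the $\tau_{1}\tau_{2}>1$ regime one finishes with the gap lemma; for $C_{1/3}$ I would instead use two features that persist at $\tau(C_{1/3})=1$. The first is a monotonicity remark: after passing to a small (deep-generation) piece, if the arc is chosen so that $g_{x,t}$ is increasing and convex, then for each gap endpoint $u$ of $C_{1/3}$ the corresponding bridge/gap ratio for $g_{x,t}(C_{1/3})$ is at least that of $C_{1/3}$, so $\tau(g_{x,t}(C_{1/3}))\ge\tau(C_{1/3})=1$ and the nonlinearity does not push the thickness product below the threshold. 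The second, and essential, feature is the self-similarity of $C_{1/3}$. Zooming into a deep-generation sub-square $Q$ of $E$ — inside which $E$ is an affine copy of itself and the relevant arc of $\partial B(x,t)$ is within $O(\diam(Q)^{2})$ of a straight line of some slope $s$ close to $1$ — the pinned problem reduces to the \emph{affine model}: whether a line of slope $s$ meets $C_{1/3}\times C_{1/3}$, equivalently whether its intercept lies in $C_{1/3}-sC_{1/3}$. One checks that the exact arithmetic of the middle-thirds set makes $C_{1/3}-sC_{1/3}$ a full closed interval for every $s$ in a neighbourhood of $1$ (the robust version of the classical identity $C_{1/3}-C_{1/3}=[-1,1]$, and false for a generic thickness-$1$ Cantor set), so the affine model is solvable for an open set of data with room to spare. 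What remains is to transfer this solvability from the affine model back to the genuinely curved circle — absorbing the $O(\diam(Q)^{2})$ error and tracking which sub-squares $Q$ contribute, so as to assemble a \emph{fixed} interval $I$ of radii realized uniformly over a \emph{fixed} open set $S$ of pins — which is done by the same mechanism underlying the robust nonlinear Newhouse gap lemma, now run at $\tau_{1}\tau_{2}=1$ with the homogeneity of $C_{1/3}$ standing in for the missing slack.

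I expect this last transfer to be the main obstacle: at $\tau(C_{1/3})^{2}=1$ there is no cushion, so one must use the scale invariance of $C_{1/3}$ (its structure is unchanged under $y\mapsto 3y\bmod 1$) to show that the affine approximation at generation $n$ can be improved as $n\to\infty$ without the nested construction ever stalling, and this must be carried out while retaining openness in the pin and an honest interval of output distances. It is also this step that explains why the theorem is stated for $C_{1/3}$ rather than for arbitrary Cantor sets of thickness $1$, in line with the remark preceding the statement: the robust interval identity for $C_{1/3}-sC_{1/3}$ and the scale invariance that powers the limiting argument are special to (finite unions of affine copies of) the middle-thirds set.
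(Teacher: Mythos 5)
Your reduction to a perturbation-robust pinned statement via the leaf-peeling mechanism matches the paper exactly, and your instinct that the only issue is the single step ``$g_{x,t}(C_{1/3})\cap C_{1/3}\neq\emptyset$ at thickness exactly $1$'' is also right. But both of your proposed ways past that step have problems. The ``monotonicity remark'' is false as stated: for a convex increasing $g$, the bridge/gap ratio improves only at those gap endpoints whose bridge lies on the side where $g$ expands more (right endpoints, in the paper's convention), and it \emph{degrades} at the endpoints whose bridge lies on the contracting side; moreover the bridge of $g(K)$ is the image of the bridge of $K$ only if $g$ preserves the relative ordering of gap lengths, which requires a quantitative bound on the oscillation of $g'$, not just monotonicity and convexity. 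So you cannot conclude $\tau(g_{x,t}(C_{1/3}))\ge 1$ this way. Your second route (zoom to a deep sub-square, approximate the arc by a line of slope $s\approx 1$, use that $C_{1/3}-sC_{1/3}$ is an interval, then ``absorb the $O(\diam(Q)^2)$ error'') leaves unproved exactly the step that \emph{is} the theorem: a nonlinear gap lemma at the threshold. You acknowledge this yourself, but no mechanism is offered for the transfer, and at $\tau=1$ there is no slack with which a generic perturbation argument could absorb the curvature error.

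What the paper does instead is prove a bespoke nonlinear gap lemma for the middle-thirds set (Lemma \ref{CantorSetImage}): if $K_1,K_2$ are \emph{sections} of $C_{1/3}$, $g$ is $C^1$ monotone with $1<g'<3$ on the hull of $K_1$, and $K_2$, $g(K_1)$ are linked, then they intersect. The proof reruns the Newhouse gap-chasing argument directly on $K_2$ and $g(K_1)$, using two exact arithmetic facts about $C_{1/3}$: every bounded gap has the same length as its bridge, and distinct gap lengths differ by a factor of at least $3$. The pinch $1<g'<3$ is precisely what guarantees that $g$ cannot reorder the gap hierarchy and that at each stage one of the two linked gaps can be replaced by a strictly smaller one. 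Geometrically this forces the constraint $1<|g_{x,t}'|<3$, i.e. the target box must lie in the wedge $W_{x^0}$ where the circle's slope is between $1$ and $3$; the skeleton is then built recursively along nested corners ($x^1=(0,0)$, $x^2=(8/9,6/9)$, \dots) using self-similarity so that consecutive vertices always satisfy the wedge condition. Your proposal never identifies this slope constraint or the resulting restriction on admissible skeletons, and without it the pinned statement you reduce to is not available for arbitrary pins. So the gap is genuine: the threshold case needs a replacement for the gap lemma itself, not a perturbation of the $\tau_1\tau_2>1$ argument.
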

\vskip.125in

%%%%%%%%%%%%%%%%%%%%%%%%%%%%%%%%%%%%%%%%%%%%%%%%%%%
\subsection{\textit{General distance trees}}
Having established results for the Euclidean distance and dot products, we turn to the more general setting of $(G,\phi)$ distance trees.

\begin{defn}[$ (G,\phi)$ distance sets]\label{Gdist_defn}
Let $G$ be a graph on the vertex set $\{1,\dots,k+1\}$ with $m$ edges, and let $\sim$ denote the adjacency relation on $G$.  
Given a function $\phi:\R^d\times \R^d\rightarrow \R$, define the \textbf{$(G,\phi)$-distance set} of $E$ to be 
\[ 
\Delta_{(G, \phi)}(E)=\{ (\phi( x^i,x^j))_{i\sim j}:x^1,...,x^{k+1}\in E, x^i\neq x^j\}.
\]
 \end{defn}

We require the following derivative condition on $\phi$.
\begin{defn}[Derivative condition]
\label{deriv_cond}
%Let $K_1, K_2\subset \R^2$.  
Let $\phi: \R^2 \times \R^2\rightarrow \R$ be a $C^1$ function 
on $A\times B$, for open sets $A, B\subset \R^2$.
% each of whose intersection with $K_1\times K_2$ is nonempty.  
We say that $\phi$ satisfies the \textbf{derivative condition} on $A\times B$ if for each $x\in A$, if $\varphi_x(y)= \phi(x,y)$,  then 
%there is at least one point $y$ of $K\cap B$ so that $\varphi$ has non-vanishing partial derivatives at $y$.  
the partial derivatives of $\varphi$ are bounded away from zero on $B$. 
\end{defn}

\begin{note}
Note that the derivative condition is satisfied, for instance, by $\phi(x,y) = |x-y|_p$, the $p$-norm, whenever $p\geq1$, and $\phi(x,y) = x\cdot y $ for appropriate choices of $A$ and $B$.  
\end{note}

\begin{thm}[Interior of $ (T,\phi)$ distance sets]
\label{phi_distancetrees}
Let $K_1,K_2$ be Cantor sets satisfying $\tau(K_1)\cdot \tau(K_2) > 1$.
%Let $K$ be a Cantor set satisfying $\tau(K)> 1$.  
Suppose $\phi: \R^2 \times \R^2\rightarrow \R$ 
satisfies the derivative condition on $A\times B$, for open $A,B\subset \R^2$, each of which intersects $K_1\times K_2$.  Then,  
for any finite tree $T$, the set $\Delta_T(K_1\times K_2)$ has non-empty interior.
\end{thm}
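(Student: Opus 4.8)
The plan is to follow the strategy already used for Theorem \ref{distancetrees}, reducing the tree case to a pinned-distance statement that is robust under perturbation of the pin, and then establishing that pinned statement via a nonlinear Newhouse gap lemma. First I would induct on the number of vertices using Proposition \ref{treestructure}: write $T$ with leaf vertex $k+1$ attached to some vertex $\ell$, so that $T$ is obtained from the subtree $T'$ on $\{1,\dots,k\}$ by adjoining the edge $\{\ell,k+1\}$. By induction, $\Delta_{(T',\phi)}(K_1\times K_2)$ contains an open box; more precisely, one wants the stronger statement that there is an open set $U\subset K_1\times K_2$ (in the relative topology, i.e. $U=V\cap(K_1\times K_2)$ for open $V\subset\R^2$) such that the configuration map realizing the $T'$-edge-lengths can be taken with the image point $x^\ell$ ranging over all of $U$, and the realized edge-length vector still covering an open box as $x^\ell$ varies. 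Then, for each such pin $x^\ell=x\in U\cap A$, the last coordinate $\phi(x^{\ell},x^{k+1})$ ranges over the pinned set $\varphi_x(K_1\times K_2)=\{\phi(x,y):y\in K_1\times K_2\}$, and I need that $\bigcap_{x\in S}\varphi_x(K_1\times K_2)$ contains an interval for some open $S$. This last ingredient is exactly the abstract statement advertised in the abstract, so the bulk of the work is proving it.

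The core is the nonlinear Newhouse gap lemma. Recall the classical statement: if two Cantor sets $K,L\subset\R$ satisfy $\tau(K)\tau(L)>1$ and neither is contained in a gap of the other (nor linked in a degenerate way), then $K\cap L\neq\emptyset$. I would prove a nonlinear, robust version: if $\varphi:B\to\R$ is $C^1$ with $\varphi'$ bounded away from $0$ and $\infty$ on $B$, then $\varphi(K_1\times K_2)$ — or rather its image under an appropriate slicing — has a thickness-type lower bound inherited from $\tau(K_1)\tau(K_2)>1$, uniformly as the map $\varphi$ varies in a small $C^1$-neighborhood. The key point is that thickness of a Cantor set in $\R$ is stable under $C^1$ diffeomorphisms with derivative bounds: if $f$ is such a diffeomorphism on a neighborhood of $K$, then $\tau(f(K))\geq c\,\tau(K)$ where $c\to 1$ as the distortion $\|f'\|/\inf|f'|\to 1$, because bridges and gaps get distorted by at most the ratio of the sup and inf of $|f'|$ on the relevant interval, and by passing to a fine enough scale (a small enough neighborhood) this ratio is as close to $1$ as desired. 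Applying this with $f=\varphi_x$ restricted to a small square $V\cap(K_1\times K_2)$ around a point where the derivative condition holds, and using that the two one-dimensional factors combine via the standard product-thickness argument (slicing $K_1\times K_2$ by level sets of $\varphi_x$, which are graphs of $C^1$ functions because one partial is nonvanishing), we get that $\varphi_x(K_1\times K_2)$ contains a Cantor set of thickness $>1$ on an interval whose location varies continuously — in fact $C^1$ — with $x$. The Newhouse gap lemma then forces these sets to intersect for all $x$ in a small open $S$, and a quantitative version of the gap lemma (tracking that the intersection point can be chosen in a controlled interval) upgrades this to: $\bigcap_{x\in S}\varphi_x(K_1\times K_2)\supset I$ for some open interval $I$.

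With the abstract statement in hand, the tree induction closes: choose $S\subset U\cap A$ open and nonempty (possible since $U$ is relatively open in $K_1\times K_2$, $A$ is open and meets $K_1\times K_2$, and we may arrange the inductive configuration region $U$ to meet $A$ — this uses that $B$ also meets $K_1\times K_2$ so that $\varphi_x$ has the relevant Cantor set in its domain). On $S$ the first $k-1$ tree edge-lengths sweep out an open box $Q$ independent of which $x\in S$ we pick (again needing the induction hypothesis in the robust form), and the last edge-length sweeps out the interval $I=\bigcap_{x\in S}\varphi_x(K_1\times K_2)$; hence $Q\times I\subset\Delta_{(T,\phi)}(K_1\times K_2)$, which is open.

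The main obstacle I anticipate is making the robustness precise and threading it through the induction. Two technical points demand care: (i) proving the $C^1$-stability of thickness with explicit control — one must choose the neighborhood $V$ of the base point small enough that the distortion of $\varphi_x$ on $V$ is close enough to $1$ that the degraded product thickness still exceeds $1$, and this choice must be uniform for $x$ in a neighborhood, which is where the \emph{robust} (as opposed to merely nonlinear) nature of the gap lemma is essential; and (ii) the bookkeeping in the induction, namely formulating a hypothesis strong enough to be propagated — it is not enough to know $\Delta_{(T',\phi)}$ has nonempty interior, one needs a "pinned at a leaf, robustly" version asserting an open box of edge-lengths realizable with the attaching vertex free to roam an open subset of $K_1\times K_2$ — and verifying the base case (a single edge) reduces to exactly the abstract statement. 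Once the stability estimate is quantified, the gap-lemma intersection and the interval extraction are standard, so I expect the write-up weight to fall on the stability lemma and the inductive formulation.
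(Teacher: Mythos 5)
Your proposal follows essentially the same route as the paper: reduce to a robust pinned ("pin wiggling") statement via leaf-removal induction on the tree, use the nonvanishing partial derivative to realize the level sets $\phi(x,y)=t$ as graphs $y_2=g_{x,t}(y_1)$ (the paper does this explicitly with the implicit function theorem), invoke the near-preservation of thickness under $C^1$ maps on small intervals (the paper's Lemma on images of thick Cantor sets, proved via $\eps$-thickness), and conclude by the openness of the linking condition in $(x,t)$ together with the Newhouse gap lemma. The two technical points you flag — quantifying the thickness stability and formulating a robust inductive hypothesis — are exactly the content of the paper's image-thickness lemma and tree-building mechanism, so the proposal is correct.
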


The proof of Theorem \ref{phi_distancetrees} is given in Section \ref{general_proof} and relies on the mechanism in Section \ref{mechanismproof}. 
%%%%%%%%%%%%%%%%%%%%%%%%%%%%%%%%%%%%%%%%%%%%%%%%%%%
\subsection{\textit{Method of proof}}
We now discuss the strategy for proving our results.  The starting point is the following classical result known as the Newhouse Gap Lemma \cite[pg. 61]{PTbook}.
\begin{lem}[Newhouse Gap Lemma]
Let $K_1,K_2\subset \R$ be Cantor sets satisfying $\tau(K_1)\tau(K_2)\geq 1$.  Suppose further that neither of the sets $K_1,K_2$ is contained in a single gap of the other.  Then, $K_1\cap K_2\neq\emp$.
\end{lem}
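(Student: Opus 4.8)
The plan is to argue by contradiction: assume $K_1 \cap K_2 = \emptyset$, and use the thickness hypothesis $\tau(K_1)\tau(K_2) \geq 1$ together with the non-containment hypothesis to derive a contradiction. Since $K_1, K_2$ are compact and disjoint, and neither is contained in a single gap of the other, there must be a pair of points $x_1 \in K_1$, $x_2 \in K_2$ "interleaved" in a certain sense — more precisely, I would first show that the non-containment hypothesis forces the existence of a bounded gap $U$ of one Cantor set, say a gap $U_1$ of $K_1$, whose endpoints are separated by $K_2$ in the sense that one endpoint of $U_1$ lies in a bounded gap $U_2$ of $K_2$, and conversely. This sets up a "linked gaps" configuration, which is the standard starting point for the gap lemma.

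The core of the argument is then a descent/bookkeeping lemma: given two linked bounded gaps $U_1$ of $K_1$ and $U_2$ of $K_2$ (meaning each gap contains exactly one endpoint of the other, and together their closures cover an interval meeting both Cantor sets), one can always find a new linked pair $U_1', U_2'$ with $U_1' \subsetneq$ (the bridge region) and strictly smaller total size, OR one directly produces a common point. The mechanism: suppose the right endpoint $u$ of $U_2$ lies inside $U_1$. Look at the bridge $B(u)$ of $K_2$ at $u$; by definition of thickness, $|B(u)| \geq \tau(K_2)\,|U_2|$. Compare $|B(u)|$ with the bridge $B(v)$ of $K_1$ at the relevant endpoint $v$ of $U_1$, using $\tau(K_1)|U_1| \leq |B(v)|$ and the key inequality $|U_1| \cdot |U_2| \leq$ (product of bridges), which follows from $\tau(K_1)\tau(K_2)\geq 1$. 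One shows that the bridge of the "outer" gap must extend past the far endpoint of the "inner" gap, so the inner gap's far endpoint is covered by a bridge (hence by the Cantor set's convex hull structure) — and then within that bridge one finds the next linked pair with the gap sizes strictly decreasing, unless the Cantor sets already intersect.

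Iterating this descent produces a nested sequence of intervals (the successive bridges) with lengths not shrinking to zero relative to the configuration, and whose intersection — by compactness of $K_1$ and $K_2$ and the fact that each interval meets both sets — must contain a point of $\overline{K_1} \cap \overline{K_2} = K_1 \cap K_2$, contradicting disjointness. The main obstacle I anticipate is the careful case analysis in the descent step: one must track which endpoint of which gap lies in which gap, handle the possibility $b = \infty$ in the definition of the bridge, and verify that the inequality chain genuinely forces one bridge to overshoot the opposite gap rather than merely reach it; getting the non-strict versus strict inequalities right (so that the process either terminates with an intersection point or strictly descends) is the delicate part. I would organize this as a single invariant — "$K_1$ and $K_2$ have linked gaps inside a common interval $I$" — and show the invariant propagates to a strictly smaller $I$, so that $\bigcap I \neq \emptyset$ yields the claim.
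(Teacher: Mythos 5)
Your descent scheme is the classical proof of the gap lemma, and its core is correct. Be aware that the paper does not actually prove this statement --- it quotes it from Palis--Takens --- but the paper's proof of Lemma~\ref{CantorSetImage} runs precisely your recursion (a pair of linked bounded gaps $U_n$, $V_n$, one of which is replaced at each step by a strictly smaller gap of the same set, the other left unchanged) in a special case, so your plan matches both the classical argument and the paper's own template. Your descent step is right: from $|B(u)|\ge\tau(K_2)|U_2|$, $|B(v)|\ge\tau(K_1)|U_1|$ and $\tau(K_1)\tau(K_2)\ge 1$ one gets $|B(u)|\,|B(v)|\ge |U_1|\,|U_2|$, so at least one bridge is at least as long as the opposite gap and therefore overshoots its far endpoint, which then either lies in the other Cantor set (done) or in a strictly smaller gap, producing the new linked pair.

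The one place your write-up would not survive being made rigorous as stated is the endgame. The successive bridges are not a nested sequence of intervals shrinking to a common point, and ``lengths not shrinking to zero relative to the configuration'' points in the wrong direction. What closes the argument is the opposite: since at each step one of the two gaps is replaced by a strictly smaller gap, and a bounded Cantor set has only finitely many gaps of length at least any fixed $\eps>0$ (they are pairwise disjoint), at least one of the sequences $(|U_n|)$, $(|V_n|)$ tends to $0$ along a subsequence. Linkedness means the closure of that small gap contains its own endpoints (points of one Cantor set) and an endpoint of the opposite gap (a point of the other), so $\dist(K_1,K_2)\le |U_{n_k}|\to 0$; compactness then contradicts $K_1\cap K_2=\emptyset$. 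Replace your ``intersection of nested intervals'' paragraph with this, and also supply the short preliminary argument that disjointness plus the non-containment hypothesis actually produces an initial linked pair of bounded gaps (this is where the hypothesis about gaps, including unbounded ones, is used); with those two repairs the proposal is a complete proof.
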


In practice, if $K_2$ is contained in the convex hull of $K_1$ it can be difficult to check whether $K_2$ is contained in a gap of $K_1$.  We will often use a special case of this condition which is easier to check.  To do this we first introduce some terminology.
\begin{defn}[Linked sets]
Two open, bounded intervals $I,J\subset\R$ are said to be \textbf{linked} if they have non-empty intersection, but neither is contained in the other.  Bounded (not necessarily open) intervals are linked if their interiors are linked.  Finally, two bounded sets $K_1,K_2\subset\R$ are linked if their convex hulls are linked.
\end{defn}
\begin{prop}[Special case of Newhouse Gap Lemma]
\label{specialcase}
Let $K_1,K_2\subset\R$ be linked Cantor sets satisfying $\tau(K_1)\cdot \tau(K_2)\geq 1$.  Then, $K_1\cap K_2\neq\emp$.
\end{prop}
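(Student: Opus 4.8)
The plan is to deduce Proposition~\ref{specialcase} directly from the Newhouse Gap Lemma by verifying its two hypotheses. The thickness hypothesis $\tau(K_1)\cdot\tau(K_2)\geq 1$ is assumed outright, so the only work is to show that the linking hypothesis forces neither $K_i$ to lie in a single gap of the other. Accordingly, I would argue by contradiction: suppose, after relabeling, that $K_1$ is contained in a single gap $G=(a,b)$ of $K_2$. A gap is by definition a connected component of $\R\setminus K_2$, hence an open interval disjoint from $K_2$; its endpoints $a,b$ (when finite) lie in (the closure of) $K_2$, and in any case the convex hull of $K_2$ is disjoint from $G$ except possibly at endpoints.

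The key step is to translate ``$K_1\subset G$'' into a statement about convex hulls that contradicts linkedness. Let $H_i=\operatorname{conv}(K_i)=[\,\min K_i,\max K_i\,]$ (these are genuine closed intervals since each $K_i$ is compact). If $K_1\subset G=(a,b)$, then $H_1=\operatorname{conv}(K_1)\subset [a,b]$, while $H_2=\operatorname{conv}(K_2)$ meets the open interval $(a,b)$ only if $K_2$ does, which it does not; more precisely $H_2$ can intersect $[a,b]$ only in the set $\{a,b\}\cap K_2$. Hence $H_1$ and $H_2$ have interiors that are disjoint: $\operatorname{int}(H_1)\subset(a,b)$ and $\operatorname{int}(H_2)$ misses $(a,b)$ except possibly at the single points $a$ or $b$, which lie on the boundary of $(a,b)$ and so not in its interior. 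Therefore $\operatorname{int}(H_1)\cap\operatorname{int}(H_2)=\emptyset$, so $H_1$ and $H_2$ are not linked, contradicting the hypothesis that $K_1,K_2$ are linked (which by definition means $H_1,H_2$ are linked, i.e.\ have linked interiors). The symmetric case where $K_2$ lies in a gap of $K_1$ is identical. Thus the hypothesis of the Newhouse Gap Lemma holds and we conclude $K_1\cap K_2\neq\emptyset$.

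I would also note the one genuinely delicate point to get right in the write-up: linkedness is defined via \emph{open} intervals and interiors, while gaps are open but hulls are closed, so one must be careful that the shared endpoint case (where $a$ or $b$ happens to belong to both $K_1$'s closure and $K_2$) does not produce a nonempty intersection of interiors — it does not, precisely because an endpoint of the gap $G$ is not in the interior of $G$. This boundary bookkeeping is the only obstacle, and it is a minor one; the bulk of the argument is the straightforward observation that ``contained in a gap'' is a strictly stronger condition than ``not linked.''
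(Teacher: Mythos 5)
Your overall strategy is the right one, and it matches how the paper treats this proposition (the paper states it without proof, as an immediate consequence of the Gap Lemma): the only thing to check is that linkedness rules out either set lying in a single gap of the other. However, the key step of your contradiction argument contains a false assertion. If $G=(a,b)$ is a \emph{bounded} gap of $K_2$, then $a,b\in K_2$, and the convex hull $H_2=[\min K_2,\max K_2]$ contains the entire closed interval $[a,b]$ --- the convex hull fills in every bounded gap. So the claim that ``$H_2$ can intersect $[a,b]$ only in the set $\{a,b\}\cap K_2$'' is wrong, and the interiors of $H_1$ and $H_2$ are certainly not disjoint in this case: in fact $\operatorname{int}(H_1)\subset(a,b)\subset\operatorname{int}(H_2)$. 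Your disjointness argument is valid only when $G$ is one of the (at most two) unbounded gaps.

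The conclusion you want at that step (``$H_1$ and $H_2$ are not linked'') is still true, but in the bounded-gap case it holds via the \emph{other} clause of the definition: $K_1\subset(a,b)$ forces $H_1\subset(a,b)\subset H_2$, so one hull is contained in the other, which is precisely one of the two ways intervals can fail to be linked. The repair is a two-case split: if $G$ is unbounded, the hulls have disjoint interiors and hence a non-linked (empty) intersection; if $G$ is bounded, $H_1$ is contained in $H_2$. Either way $K_1$ and $K_2$ are not linked, so the contrapositive supplies the hypothesis of the Newhouse Gap Lemma and the proposition follows. With that correction the argument is complete.
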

Now, given a fixed point $x\in \R^2$ and distance $t\in \R$, we have $|x-y|=t$ if $y_2=g_{x,t}^{\text{dist}}(y_1)$, where
\[
g_{x,t}^{\text{dist}}(z)=x_2+\sqrt{t^2-(z-x_1)^2}.
\]
Likewise, we have $x\cdot y=t$ if $y_2=g_{x,t}^{\text{dot}}(y_1)$, where
\[
g_{x,t}^{\text{dot}}(z)=\frac{t}{x_2}-\frac{x_1}{x_2}z.
\]
We are therefore interested in applying the Newhouse Gap Lemma to find a point in the intersection $K\cap g(K)$ for an appropriate function $g$.  In the case of the dot product this is simple, since affine transformations preserve thickness and we therefore only have to find values of $x,t$ for which $K$ and $g_{x,t}^{\text{dot}}(K)$ are linked.  Matters are more difficult for distances since, in general, smooth functions do not necessarily preserve thickness.  In \cite{STinterior} it is proved that if $g$ is continuously differentiable and $I$ is a sufficiently small interval on which $g'$ is bounded away from zero, then the thickness of $g(K\cap I)$ is not too much smaller than that of $K$.  This allows one to prove that the pinned sets $\Delta_x(K\times K)$ and $\Pi_x(K\times K)$ have non-empty interior if $\tau(K)>1$. \\

Using this strategy, we prove that there exists an interval $I$ which is contained in $\Delta_x(K\times K)$, and such that $I$ remains in $\Delta_x(K\times K)$ if the pin $x$ is perturbed a small amount.  Indeed, let $g_{x,t}$ denote either of the functions defined above.  For fixed $z$, the quantity $g_{x,t}(z)$ is continuous in the parameters $x$ and $t$.  Therefore, the condition that $K$ and $g_{x,t}(K)$ are linked is an open condition.  In Section \ref{KeyLemmas} we prove these ``pin wiggling" lemmas for each of the functions needed in our theorems.  Once we have proved such a lemma, we can convert it into a theorem about trees using a mechanism which is discussed in Section \ref{mechanismproof}. 
 
 % It would be interesting to understand if $5/4$ suffices to get non-empty interior. 
%DOT PRODUCTS, AREAS, 

\section{Converting Pin Wiggling Lemmas into Tree Theorems}
\label{mechanismproof}
In Section \ref{KeyLemmas}, we will prove lemmas showing that not only do pinned distance and dot product sets contain intervals, but that there is a single interval which works for all such sets obtained by wiggling the pin a small amount.  The goal of this section is to state and prove a theorem which gives us a mechanism to convert pin wiggling lemmas to our main theorems.  The setup is as follows.  Let $\phi:\R^2\times \R^2\to \R$ be any function; for example, to prove Theorem \ref{distancetrees} we use the function $\phi(x,y)=|x-y|$.  Given a point $x$ and a set $E$, we use the notation
\[
\phi(x,E):=\{\phi(x,y):y\in E\}.
\]
A \textbf{pin wiggling lemma} is a lemma which says, under some assumptions on $E$, that there is a single interval $I$ contained in $\phi(x,E)$ for a range of $x$; equivalently, the set
\[
\bigcap_{x\in S}\phi(x,E)
\]
has non-empty interior for some neighborhood $S$ of pins.  Now, given such a function $\phi$ and a tree $T$ on vertices $\{1,\dots,k+1\}$, define
\[
\Phi(x^1,\dots,x^{k+1})=(\phi(x^i,x^j))_{i\sim j}.
\]
Thus, the sets $\Delta_T(E)$ and $\Pi_T(E)$ are the images of $E^{k+1}$ under $\Phi$ for $\phi(x,y)=|x-y|$ and $\phi(x,y)=x\cdot y$, respectively.  Our main theorems are therefore giving conditions under which the sets $\Phi(E^{k+1})$ have non-empty interior.  With this setup, the conversion mechanism is as follows.
\begin{thm}[Tree building mechanism]
\label{mechanism}
Fix a map $\phi:\R^2\times \R^2\to\R$ and a tree $T$ on vertices $\{1,\dots,k+1\}$, and consider the map $\Phi:(\R^2)^{k+1}\to\R^k$ defined by
\[
\Phi(x^1,\dots,x^{k+1})=(\phi(x^i,x^j))_{i\sim j},
\]
where $\sim$ denotes the adjacency relation of the graph $T$.  
Let $K_1,K_2$ be Cantor sets satisfying $\tau(K_1)\cdot \tau(K_2) > 1$,
%Let $K\subset \R$ be a Cantor set, 
and let $x^1,\dots,x^{k+1}\in K_1\times K_2$ be distinct points.  Suppose that for any Cantor sets $\widetilde{K_j} \subset K_j$,
% with thickness $\tau(\widetilde{K_j})\geq \tau(K_j)$, 
there exist open neighborhoods $S_i$ of $x^i$ such that the set
\[
\bigcap_{x\in S_i}\phi(x,\widetilde{K_1}\times \widetilde{K_2})
\]
has non-empty interior.  Then, $\Phi((K_1\times K_2)^{k+1})$ has non-empty interior.  Moreover, $\Phi(x^1,\dots,x^{k+1})$ is in the closure of $\Phi((K_1\times K_2)^{k+1})^\circ$. 

\end{thm}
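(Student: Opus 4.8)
The plan is to induct on the number of vertices $k+1$ using the leaf-peeling structure from Proposition \ref{treestructure}. Let $T = T_1, T_2, \dots, T_{k+1}$ be the sequence of trees, where $T_{i+1}$ is obtained from $T_i$ by deleting a leaf $v_i$ and its unique incident edge; relabel so that the vertex removed at each stage is the largest-indexed one. The inductive hypothesis, stated for a tree $T'$ on $m$ vertices, should be: for any Cantor subsets $\widetilde{K_j}\subset K_j$ and any distinct points $y^1,\dots,y^m \in \widetilde{K_1}\times\widetilde{K_2}$, the set $\Phi_{T'}((\widetilde{K_1}\times\widetilde{K_2})^m)$ has non-empty interior and $\Phi_{T'}(y^1,\dots,y^m)$ lies in the closure of its interior. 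The base case $m=1$ is vacuous (the image is a single point in $\R^0$, or one simply starts the induction at $m=2$, where the statement is exactly the pin wiggling hypothesis applied with $\widetilde{K_j}=K_j$).

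For the inductive step, suppose $T$ has $k+1$ vertices and vertex $k+1$ is a leaf adjacent to some vertex $p \le k$. Write $T' = T_2$ for the tree on $\{1,\dots,k\}$. First I would apply the pin wiggling hypothesis at the point $x^p$ with $\widetilde{K_j} = K_j$: there is an open neighborhood $S_p$ of $x^p$ and an interval $I \subset \bigcap_{x\in S_p}\phi(x, K_1\times K_2)$. The key observation is that $I$ is a \emph{fixed} interval that works for every pin $x$ in $S_p$. Now choose small Cantor subsets $\widetilde{K_j}\subset K_j$ with $x^j \in \widetilde{K_1}\times\widetilde{K_2}$ for all $j\le k$ and such that $\widetilde{K_1}\times\widetilde{K_2} \subset S_p$ (possible since $x^p$ is interior to $S_p$ and Cantor sets have arbitrarily small Cantor subsets through any of their points; thickness of the subsets is irrelevant here because we only reuse the pin wiggling hypothesis at the \emph{original} pins once, at the top level — actually we need the hypothesis for arbitrary Cantor subsets, which is exactly why it is phrased that way). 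Apply the inductive hypothesis to $T'$ with these subsets and the points $x^1,\dots,x^k$: the set $\Phi_{T'}((\widetilde{K_1}\times\widetilde{K_2})^k)$ has non-empty interior $U$, and $\Phi_{T'}(x^1,\dots,x^k)\in\overline{U}$. Then for the full tree, $\Phi_T$ factors as $(\text{edges of }T') \times (\phi(x^p, x^{k+1}))$, and since the $x^{k+1}$-coordinate ranges over all of $\widetilde{K_1}\times\widetilde{K_2}\subset S_p$ while $x^p$ also ranges in there, the new coordinate covers $I$ independently of the $T'$-coordinates. Hence $\Phi_T((\widetilde{K_1}\times\widetilde{K_2})^{k+1}) \supset U \times I$, which has non-empty interior, and it is contained in $\Phi_T((K_1\times K_2)^{k+1})$. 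The closure statement follows because $\Phi_{T'}(x^1,\dots,x^k)\in\overline{U}$ and the last coordinate $\phi(x^p,x^{k+1})$ can be pushed into the interior of $I$ by perturbing $x^{k+1}$ slightly within $\widetilde{K_1}\times\widetilde{K_2}$, using continuity of $\phi$ and density of $\widetilde{K_1}\times\widetilde{K_2}$ near $x^{k+1}$ — or, more carefully, one notes that $\Phi_T(x^1,\dots,x^{k+1})$ has $T'$-part in $\overline U$ and last coordinate equal to some value in $I$, and $\overline{U\times I} \supseteq \overline U \times I \ni \Phi_T(x^1,\dots,x^{k+1})$.

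The main obstacle I anticipate is bookkeeping the independence of the new coordinate from the old ones: when we attach vertex $k+1$ to $p$, the value $\phi(x^p, x^{k+1})$ depends on $x^p$, which is also a variable determining the $T'$-coordinates. The clean fix is to not vary $x^p$ at all when building the last coordinate — fix all of $x^1,\dots,x^k$ at values realizing a point of $U$, and then let \emph{only} $x^{k+1}$ range over $\widetilde{K_1}\times\widetilde{K_2}$; since that whole set lies in $S_p$ and $x^p\in S_p$, the pin wiggling lemma guarantees $\phi(x^p, \widetilde{K_1}\times\widetilde{K_2}) \supset I$. This shows the fiber over each point of $U$ contains $I$, giving $U\times I$ in the image. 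A secondary point to handle with care is ensuring the points $x^1,\dots,x^k$ remain distinct and remain in the shrunken Cantor sets $\widetilde{K_j}$; this is arranged by choosing the $\widetilde{K_j}$ at the start of the step to be small enough to sit inside $S_p$ but large enough to still contain all $k$ of the (finitely many, distinct) original points — again possible by the structure of Cantor sets. With these choices the induction closes.
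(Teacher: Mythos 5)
Your high-level plan (peel off a leaf, use the uniformity of the interval over a whole neighborhood of pins to append an independent coordinate, iterate) is the same as the paper's, but the inductive step as written does not go through, for two concrete reasons. First, you require a single product $\widetilde{K_1}\times\widetilde{K_2}$ that simultaneously contains all of the distinct points $x^1,\dots,x^k$ and is contained in the small neighborhood $S_p$ of $x^p$; for $k\geq 2$ this is impossible, since the $x^j$ are scattered over $K_1\times K_2$ while $S_p$ is a neighborhood of just one of them. The roles have been conflated: the Cantor subsets $\widetilde{K_j}$ fed into the pin wiggling hypothesis should be localized near the \emph{leaf} $x^{k+1}$ (the point over which $y$ ranges), whereas $S_p$ is a neighborhood of the \emph{adjacent} vertex (the pin); these are disjoint regions of the plane. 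Second, you obtain the interval $I$ by applying the hypothesis with $\widetilde{K_j}=K_j$, which gives $I\subset\phi(x,K_1\times K_2)$ for $x\in S_p$, and then later assert $\phi(x^p,\widetilde{K_1}\times\widetilde{K_2})\supset I$ for a much smaller set $\widetilde{K_1}\times\widetilde{K_2}$. That implication is false in general: shrinking the range of $y$ can destroy the interval, and the entire reason the hypothesis is quantified over arbitrary Cantor subsets is that it must be invoked directly with the subsets you intend to use, yielding a possibly different interval and a possibly different pin neighborhood each time.

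The repair is essentially the paper's bookkeeping. Place disjoint boxes $C(x^j,\eps)$ around each skeleton point (with $2\eps$ the minimal pairwise distance), take $\widetilde{K_j}=K_j\cap C_j(x^{k+1},\eps)$ localized at the leaf, apply the hypothesis with \emph{these} subsets to obtain $S_p$ and an interval $I_1$, and then shrink all the boxes to radius $\eps_2$ so that $C(x^p,\eps_2)\subset S_p$. Setting $E_j=C(x^j,\eps_2)\cap(K_1\times K_2)$ one gets $\Phi(E_1\times\cdots\times E_{k+1})\supset\Phi_2(E_1\times\cdots\times E_k)\times I_1$: the last coordinate covers $I_1$ for every choice of the other points because the pin never leaves $S_p$, and the disjointness of the boxes keeps the configuration non-degenerate -- a point your version also leaves unaddressed once $x^{k+1}$ is allowed to range over a set contained in $S_p$, hence arbitrarily close to the perturbed $x^p$. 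Iterating along the leaf-removal sequence of Proposition \ref{treestructure} then closes the argument; your outer induction is fine once the step is corrected in this way.
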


\begin{proof}
%Let $K$ denote a Cantor set satisfying $\tau(K) >1$.  
For the purpose of ensuring non-degeneracy, 
let $2\eps>0$ denote the minimal distance:
$$\eps= \frac{1}{2} \min \left\{ |x^i- x^{j}|: i \neq j \in \{ 1, 2, \dots, k+1\} \right\}>0,$$
%SEPARATE k+1 POINTS IN E
and, for each $i=1,2, \dots, k+1$, define the $\eps$-box about $x^i$ by 
\begin{align*}
C(x^i, \eps)
=& x^i + [- \eps,\eps]^2\\
=& [x^i_1 - \eps, x^i_1+\eps] \times [x^i_2- \eps, x^i_2+\eps]\\
=& C_1(x^i, \eps) \times C_2(x^i, \eps),
\end{align*}
where $C_1(x^i, \eps), C_2(x^i, \eps)$ are the closed $\eps$-intervals about the coordinates of $x^i$ (Figure \ref{boxes}).
\begin{figure}[ht]
\centering
\begin{tikzpicture}
\begin{axis}[xmin=-.1,xmax=5,ymin=-.1,ymax=5,axis x line=center, axis y line=center, xtick={0}, xticklabels={}, ytick={0},yticklabels={}]

%points
\addplot[mark=*] coordinates {(1,2)};
\addplot[mark=*] coordinates {(3,2.5)};
\addplot[mark=*] coordinates {(4,.5)};
\addplot[mark=*] coordinates {(4.5,3)};
\addplot[mark=*] coordinates {(1.5,4)};

%edges
\addplot[dashed] coordinates {(1,2)(1.5,4)};
\addplot[dashed] coordinates {(1,2)(3,2.5)};
\addplot[dashed] coordinates {(3,2.5)(4.5,3)};
\addplot[dashed] coordinates {(3,2.5)(4,.5)};

%boxes
\addplot[thick] coordinates {(.8,1.8)(.8,2.2)(1.2,2.2)(1.2,1.8)(.8,1.8)};
\addplot[thick] coordinates {(2.8,2.3)(2.8,2.7)(3.2,2.7)(3.2,2.3)(2.8,2.3)};
\addplot[thick] coordinates {(3.8,.3)(3.8,.7)(4.2,.7)(4.2,.3)(3.8,.3)};
\addplot[thick] coordinates {(4.3,2.8)(4.3,3.2)(4.7,3.2)(4.7,2.8)(4.3,2.8)};
\addplot[thick] coordinates {(1.3,3.8)(1.3,4.2)(1.7,4.2)(1.7,3.8)(1.3,3.8)};

\end{axis}

\end{tikzpicture}
\caption{Boxes $C(x^i,\eps)$ around points $x^1,...,x^5$}
\label{boxes}
\end{figure}

%INTRODUCE TREE AND RELABEL VERTICES  
Next, choose any leaf of $T$; without loss of generality we may assume we have labeled the vertices so that $k+1$ is our leaf.  Let $i$ denote the unique vertex which satisfies $i\sim k+1$.  Let $\widetilde{K_j} = K_j \cap C_j(x^{k+1}, \eps)$.   By assumption, there exists a neighborhood $S_{i}$ of $x^{i}$ so that the set
\begin{equation}\label{first_iter_eq}
 \bigcap_{x\in S_{i}}\phi(x,\widetilde{  K_1 }\times \widetilde{K_2})
\end{equation}
has non-empty interior.  Further, we may assume $S_{i}\subset C(x^{i},\eps)$, which guarantees that the points in $S_{i}$ and points in $\widetilde{K_1}\times \widetilde{K_2}\subset C(x^{k+1}, \eps)$ are distinct. 
Moreover, we can choose $\eps_{2} \in (0, \eps]$ so that $C(x^i,\eps_{2}) \subset S_{i}$, and hence \eqref{first_iter_eq} still holds with $C(x^{i},\eps_2) $ in place of $S_{i}$.  For simplicity, we replace each of the  $\eps$-boxes about $x^{1}, \dots, x^{k+1}$ by potentially smaller boxes $C(x^{j}, \eps_2)$ for each $j \in \{1, \dots, k+1\}$.
\\

To conclude, let $E_{i} = C(x^{i},\eps_2)\cap (K_1\times K_2)$, let $T_2$ be the tree obtained from $T$ by removing the vertex $k+1$ and its corresponding edge, and let $\Phi_2$ be the function as in the statement of the theorem, corresponding to the tree $T_2$.  We have proved there exists a non-empty open interval $I_{1}$ so that 
\[
\Phi(E_1\times\cdots\times E_{k+1})\supset \Phi_2(E_1\times\cdots\times E_k)\times I_1.
\]
Running this argument successively on each of the trees $T_1,T_2,...,T_k$ as in Proposition \ref{treestructure}, we conclude that $\Phi((K_1\times K_2)^{k+1})$ contains a set of the form $I_1\times \dots\times I_k$ for non-empty open intervals $I_1, \dots, I_k$.  By construction, it is clear that $\Phi(x^1,...,x^{k+1})$ is in the closure of $I_1\times\cdots\times I_k$.
\end{proof}

From the statement of Theorem \ref{mechanism}, we see that we may start with any points $x^1,\dots,x^{k+1}\in K_1\times K_2$ and obtain an open box near $\Phi(x^1,\dots,x^{k+1})$, provided we can prove pin wiggling lemmas around those points.  We will refer to the starting points $x^1,\dots,x^{k+1}$ as a \textbf{skeleton}.

\begin{remark}
It would be interesting to find an interval $I$ such that $I^k\subset \Delta_T(K\times K)$, as is done in \cite{BIT16, ITtrees} in the large Hausdorff dimension context.  By Theorem \ref{mechanism} below, this amounts to showing there exists a skeleton $x^1,...,x^{k+1}\in K\times K$ such that to two points share a coordinate, and the distances $|x^i-x^j|$ are constant for $i\sim j$.  It is not clear how to do this in general.  In the special case where $T$ is a $k$-chain, it is sufficient (but not necessary) that $K$ contains a length $k+1$ arithmetic progression.  Given an arithmetic progression $a_1,...,a_{k+1}\in K$, we could then take $x^i=(a_i,a_i)$.  A result of Yavicoli \cite{Yav20} shows that long arithmetic progressions exist in (very) thick Cantor sets.  However, the required lower bound on thickness is much larger then the $\tau(K)>1$ assumption in our results; to ensure even a $3$-term arithmetic progression, one needs $\tau(K)$ at least on the order of $10^9$.  Moreover, let $C_\eps$ denote the middle-$\eps$ Cantor set, obtained by starting with the unit interval and at each stage deleting the middle $\eps$ proportion from the remaining intervals.  Broderick, Fishman, and Simmons \cite{BFS19} prove that there is no arithmetic progression in $C_\eps$ of length greater than $\frac{1}{\eps}+1$ for $\eps$ sufficiently small.  Therefore, for any $k\in \Z$ sufficiently large, the set $C_{2/k}$ is a Cantor set with thickness $\frac{k-2}{4}$ and no $(k+1)$-term arithmetic progression.  
This means there is no hope for a thickness threshold which is uniform in $k$.  One can still hope to find a common interval by using the $2$-dimensionality of $K\times K$ instead of hoping to take a sequence of points along the diagonal, but it is not clear how to find the necessary skeleton.
\end{remark}

%%%%%%%%%%%%%%%%%%%%%

\section{Pin Wiggling Lemmas in Various Contexts}
\label{KeyLemmas}
The proofs in this section are presented in increasing order of complexity. 
\subsection{\textit{Proof of Theorem \ref{dotproducttrees}}}
We begin by proving our result on dot product trees.  As discussed in the introduction, dot products are much simpler than distances because thickness is preserved under affine transformations.  As a consequence, Theorem \ref{dotproducttrees} is the simplest of our results.\\
%, and its proof provides a simple introduction to the techniques needed to obtain distance results. \\

Theorem \ref{dotproducttrees} is an immediate consequence of Theorem \ref{mechanism} and the following lemma.
\begin{lem}[Pin Wiggling for Dot Products]
\label{dotproductkeylemma}
Let $K_1,K_2$ be Cantor sets satisfying $\tau(K_1)\cdot \tau(K_2) \geq 1$.  Let $\ell_j$ denote the length of the convex hull of $K_j$.
\begin{enumerate}[(i)]
\item For any $x=(x_1,x_2)\in \R^2$ with both coordinates nonzero, the set $\Pi_x(K_1\times K_2)$ contains an interval of length at least $\ell\cdot\min(|x_1|,|x_2|)$
\item Let $x^0=(x_1^0,x_2^0)\in \R^2$ be a point with both coordinates nonzero.  Let $Q$ be the square centered at $x^0$ with side length $2\delta$, and assume $\delta<\frac{1}{3}\min(|x_1^0|,|x_2^0|)$.  The set
\[
\bigcap_{x\in Q}\Pi_x(K_1\times K_2)
\] 
contains an interval of length at least $\ell\cdot(\min(|x_1^0|,|x_2^0|)-3\delta)$.
\end{enumerate}
\end{lem}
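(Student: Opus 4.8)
The plan is to reduce everything to a linking statement about Cantor sets and apply Proposition \ref{specialcase}, exploiting that the level sets of the dot product are affine lines and that affine maps preserve thickness. For part (i): fix $x=(x_1,x_2)$ with $x_1,x_2\neq 0$. For $y=(y_1,y_2)$ we have $x\cdot y = t$ iff $y_2 = g_{x,t}^{\mathrm{dot}}(y_1) = \tfrac{t}{x_2} - \tfrac{x_1}{x_2} y_1$, which is an affine function of $y_1$ with nonzero slope. Writing $H_j = [\alpha_j,\beta_j]$ for the convex hull of $K_j$ (so $\ell_j = \beta_j-\alpha_j$, and I'd set $\ell$ to mean $\ell_1$ — or whichever convention the lemma intends; I'll match it), the map $z \mapsto g_{x,t}^{\mathrm{dot}}(z)$ sends $H_1$ onto an interval $J(t)$ of length $\tfrac{|x_1|}{|x_2|}\ell_1$, and as $t$ ranges over an interval of length $L$, the intervals $J(t)$ translate (rigidly) through a total displacement $L/|x_2|$. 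First I would show: there is an interval $I$ of $t$-values, of length comparable to $\ell \cdot \min(|x_1|,|x_2|)$, such that for every $t\in I$ the affine image $g_{x,t}^{\mathrm{dot}}(K_1)$ and the set $K_2$ are linked (their convex hulls properly overlap). Since $g_{x,t}^{\mathrm{dot}}$ is an affine bijection of $\R$, $\tau\big(g_{x,t}^{\mathrm{dot}}(K_1)\big) = \tau(K_1)$, hence $\tau\big(g_{x,t}^{\mathrm{dot}}(K_1)\big)\cdot \tau(K_2) = \tau(K_1)\tau(K_2) \ge 1$, and Proposition \ref{specialcase} gives a point $z \in g_{x,t}^{\mathrm{dot}}(K_1) \cap K_2$; taking $y_1 = (g_{x,t}^{\mathrm{dot}})^{-1}(z)\in K_1$ and $y_2 = z \in K_2$ produces $y = (y_1,y_2)\in K_1\times K_2$ with $x\cdot y = t$. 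Thus $I \subset \Pi_x(K_1\times K_2)$.

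The quantitative bookkeeping is the only real content. Linking of $g_{x,t}^{\mathrm{dot}}(H_1)$ with $H_2$ holds precisely for $t$ in an interval, and a direct computation of the endpoints (where one convex hull's endpoint crosses the other's) shows this $t$-interval has length at least $\min\big(|\text{length of }H_2|,\ |\text{length of }g_{x,t}^{\mathrm{dot}}(H_1)|\big)\cdot|x_2| = \min\big(\ell_2 |x_2|,\ \ell_1 |x_1|\big)$, which is $\ge \ell\cdot\min(|x_1|,|x_2|)$ with $\ell = \min(\ell_1,\ell_2)$ (again, modulo matching the paper's exact convention for $\ell$ and whether $K_1,K_2$ have a common hull). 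I would carry this out by parametrizing: the left endpoint of $g_{x,t}^{\mathrm{dot}}(H_1)$ is an affine increasing or decreasing function of $t$ with slope $\pm 1/|x_2|$, likewise the right endpoint, and "linked" is the open condition that left-of-one $<$ right-of-other for both orderings; intersecting the two resulting half-lines of admissible $t$ gives the claimed interval.

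For part (ii): the key observation, already flagged in the introduction, is that for fixed $z$ the quantity $g_{x,t}^{\mathrm{dot}}(z) = \tfrac{t - x_1 z}{x_2}$ is jointly continuous in $(x,t)$, so "$g_{x,t}^{\mathrm{dot}}(K_1)$ and $K_2$ are linked" is an open condition in $(x,t)$. Concretely, for $x\in Q$ (so $|x_1 - x_1^0|,|x_2-x_2^0|\le\delta$, hence $|x_1|\ge|x_1^0|-\delta$, $|x_2|\le|x_2^0|+\delta$, etc.), I would re-run the endpoint computation from part (i) but now take the intersection over all $x\in Q$ of the $t$-intervals on which linking holds. Because the endpoints of $g_{x,t}^{\mathrm{dot}}(H_1)$ vary by at most $O(\delta)$ (uniformly, using that $H_1$ is bounded and $|x_2|$ is bounded below by $|x_2^0|-\delta > 0$), the common $t$-interval shrinks by at most a controlled amount, yielding length at least $\ell\cdot(\min(|x_1^0|,|x_2^0|) - 3\delta)$; the hypothesis $\delta < \tfrac13\min(|x_1^0|,|x_2^0|)$ is exactly what keeps this positive and keeps all coordinates bounded away from $0$. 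For every $t$ in this common interval and every $x\in Q$, Proposition \ref{specialcase} then produces $y\in K_1\times K_2$ with $x\cdot y = t$, i.e. $t\in\Pi_x(K_1\times K_2)$ for all $x\in Q$, so $t\in\bigcap_{x\in Q}\Pi_x(K_1\times K_2)$.

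The main obstacle is purely the explicit constant-chasing: tracking how the endpoints of the affine image $g_{x,t}^{\mathrm{dot}}(H_1)$ depend on $(x,t)$, getting the worst-case estimates over the square $Q$ to land on the stated bound $\ell\cdot(\min(|x_1^0|,|x_2^0|)-3\delta)$ rather than something weaker, and making sure the "$\ell$" and the placement of $|x_1|$ versus $|x_2|$ in "$\min$" are consistent with the lemma's normalization (in particular whether a rescaling reduces to $H_1 = H_2 = [0,1]$). No serious conceptual difficulty arises beyond affine invariance of thickness plus the openness of the linking condition — both already available in the excerpt.
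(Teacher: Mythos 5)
Your proposal is correct and follows essentially the same route as the paper: reduce $x\cdot y=t$ to a linking condition between two affine images of $K_1$ and $K_2$ (the paper phrases it as $(t-x_1K_1)\cap(x_2K_2)\neq\emptyset$ rather than via $g^{\mathrm{dot}}_{x,t}$, which avoids the extra division by $x_2$ in the bookkeeping), invoke affine invariance of thickness plus the Newhouse Gap Lemma, and then compute the $t$-interval of linking explicitly, shrinking its endpoints by $O(\ell\delta)$ for part (ii). The notational ambiguity you flag about $\ell$ versus $\ell_1,\ell_2$ is indeed present in the paper, which tacitly takes a common hull length $\ell$.
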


\begin{proof}
For any $x=(x^1, x^2)\in \R^2$, we have $t\in \Pi_x(K_1\times K_2)$ if and only if $(t-x_1K_1)\cap (x_2K_2)\neq\emp$.  Since $\tau(t-x_1K_1)\cdot \tau(x_2K_2)=\tau(K_1)\cdot \tau(K_2)\geq 1$, by the Newhouse Gap Lemma, this intersection will be non-empty for any $x$ and $t$ such that the sets $(t-x_1K_1)$ and $x_2K_2$ are linked.  Denote the convex hull of $x_jK_j$ by $[a_j,a_j+\ell|x_j|]$, and without loss of generality assume $|x_1|\geq |x_2|$.  The sets $t-x_1K$ and $x_2K$ are linked whenever 
\[
a_1+a_2+\ell |x_1|<t<a_1+a_2+\ell |x_1|+\ell |x_2|.
\tag{$*$}
\]
The set of $t$ satisfying ($*$) is an interval of length $\ell |x_2|$, and so (i) follows immediately.  To prove (ii), assume $(x_1,x_2)\in Q$ and therefore $|x_j-x_j^0|<\delta$ for each $j$.  The value $t$ satisfies ($*$) for all such $x_1,x_2$ provided
\[
a_1+a_2+\ell |x_1^0|+\ell\delta<t<a_1+a_2+\ell |x_1^0|+\ell |x_2^0|-2\ell\delta.
\]
This inequality determines an interval of length $\ell(|x_2^0|-3\delta)$.

\end{proof}

Note that when we apply Theorem \ref{mechanism}, we can start with any skeleton $x^1,\dots,x^{k+1}\in K\times K$ such that none of the points $x^i$ are on the axes.

%%%%%%%%%%%%%%%%%%%%%%%%%%%%%%%%%%%%%%%%%%%%%%%%%%
\subsection{\textit{Proof of Theorem \ref{distancetrees}}}
As in the previous section, the proof will rely on the mechanism established in Theorem \ref{mechanism} coupled with a pin wiggling lemma.  The difference is that the lemma of this section will not follow directly from the linear theory and some preliminary set up is required.  

First, observe that given a pin $x\in \R^2$ and distance $t\in \R$, we have
$t\in \Delta_x(K\times K)$ whenever $y_2=g_{x,t}(y_1)$ for some $y = (y_1, y_2) \in K\times K$, where
\[
g_{x,t}(z)=x_2+\sqrt{t^2-(z-x_1)^2}.
\]

We would like to apply the Newhouse Gap Lemma to $K$ and $g_{x,t}(K)$ to prove a pin wiggling lemma for distance sets (Lemma \ref{distanceskeylemma} below), then conclude that Theorem \ref{distancetrees} follows (by Theorem \ref{mechanism}).  However, there is no thickness assumption on $K$ which would guarantee $\tau(g_{x,t}(K))\geq 1$, so we cannot apply Newhouse directly.  However, if $I$ is a sufficiently small interval about a non-singular point of $g_{x,t}$, then the thickness of $g_{x,t}(K\cap I)$ is not too much smaller than that of $K$.  This can be proved using a generalization of thickness which was introduced in \cite{STinterior}, which we describe here.

%\subsubsection{\textit{Preliminaries}}
\begin{defn}[$\epsilon$-thickness]
Let $K\subset\R$ be a Cantor set, let $u$ be a right endpoint of a bounded gap, and let $\eps>0$.  Let $(a,b)$ be the closest gap to $G$ with the property that $a>u$ and $(b-a)>(1-\eps)|G|$ The \textbf{$\eps$-bridge} of $u$, denoted $B_\eps(u)$, is the interval $(u,a)$.  We make analogous definitions for left endpoints.  The \textbf{$\eps$-thickness} of $K$ at $u$ is the quantity
\[
\tau_\eps(K,u):= \frac{|B_\eps(u)|}{|G|}.
\]
Finally, the $\eps$-thickness of the Cantor set $K$ is the quantity
\[
\tau_\eps(K):=\inf_u \tau_\eps(K,u),
\]
the infimum being taken over all gap endpoints $u$.
\end{defn}
We record some easily verifiable properties of $\eps$-thickness in the following proposition.
\begin{prop}[$\eps$-thickness converges to regular thickness]
\label{basics}
Let $K\subset \R$ be a Cantor set.
\begin{enumerate}[(i)]
\item If $\eps_1<\eps_2$ then $\tau_{\eps_1}(K)\geq \tau_{\eps_2}(K)$.
\item $\tau_\eps(K)\to \tau(K)$ as $\eps\to 0$.
\end{enumerate}
\end{prop}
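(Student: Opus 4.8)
The plan is to prove both statements directly from the definition of the $\eps$-bridge, the key observation being that enlarging $\eps$ weakens the size requirement ``$b-a > (1-\eps)|G|$'' on the gap that terminates a bridge, and hence can only move the terminating gap closer to the endpoint $u$.

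For part (i), fix a right endpoint $u$ of a bounded gap $G$. If $\eps_1 < \eps_2$, then $(1-\eps_1)|G| > (1-\eps_2)|G|$, so every gap $(a,b)$ with $a > u$ and $b - a > (1-\eps_1)|G|$ also satisfies $b - a > (1-\eps_2)|G|$. Since the $\eps_j$-bridge is determined by the \emph{closest} such gap, the terminating gap for $\eps_2$ is no farther from $u$ than the one for $\eps_1$, so $|B_{\eps_2}(u)| \le |B_{\eps_1}(u)|$ and hence $\tau_{\eps_2}(K,u) \le \tau_{\eps_1}(K,u)$; the same comparison holds at left endpoints, and taking the infimum over all gap endpoints gives $\tau_{\eps_2}(K) \le \tau_{\eps_1}(K)$. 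The identical comparison against the ordinary bridge (where one requires $b-a \ge |G| > (1-\eps)|G|$ for $\eps>0$) shows in addition that $\tau_\eps(K) \le \tau(K)$ for every $\eps > 0$.

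For part (ii), by (i) the quantity $\tau_\eps(K)$ is nondecreasing as $\eps \downarrow 0$ and is bounded above by $\tau(K)$, so $L := \lim_{\eps \to 0^+}\tau_\eps(K)$ exists and $L \le \tau(K)$. It remains to rule out $L < \tau(K)$, which I would do by contradiction. Assuming $L < \tau(K)$, for each small $\eps > 0$ choose a gap endpoint $u_\eps$ — say a right endpoint of a gap $G_\eps$ of length $g_\eps$ — with $\tau_\eps(K, u_\eps) < L + \eps$; write $B_\eps(u_\eps) = (u_\eps, a_\eps)$ and let $(a_\eps, b_\eps)$ be the terminating gap, so $b_\eps - a_\eps > (1-\eps)g_\eps$ and $a_\eps - u_\eps < (L + \eps)g_\eps$. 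For $\eps$ small enough that $L + \eps < \tau(K)$, the interval $(u_\eps, a_\eps)$ sits strictly inside the ordinary bridge $B(u_\eps)$ (whose length is $\tau(K,u_\eps)g_\eps \ge \tau(K)g_\eps$), so every gap strictly inside it has length $< g_\eps$; since every such gap also has length $\le (1-\eps)g_\eps < b_\eps - a_\eps$, going \emph{left} from the endpoint $a_\eps$ the first gap of length at least $b_\eps - a_\eps$ is exactly $G_\eps$. Hence the ordinary leftward bridge at $a_\eps$ is $B(a_\eps) = (u_\eps, a_\eps)$, and
\[
\tau(K, a_\eps) = \frac{a_\eps - u_\eps}{b_\eps - a_\eps} < \frac{(L+\eps)g_\eps}{(1-\eps)g_\eps} = \frac{L + \eps}{1 - \eps}.
\]
As $\eps \to 0$ the right side tends to $L < \tau(K)$, so for small $\eps$ we get $\tau(K, a_\eps) < \tau(K)$, contradicting $\tau(K) = \inf_v \tau(K,v)$. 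Therefore $L = \tau(K)$, which gives the claim.

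The main obstacle is the bookkeeping in part (ii): verifying that $(u_\eps, a_\eps)$ really lies strictly inside $B(u_\eps)$ (so $b_\eps - a_\eps$ is genuinely smaller than $g_\eps$), and that no gap between $u_\eps$ and $a_\eps$ is large enough to shorten the leftward bridge at $a_\eps$. Both follow from the defining property of the $\eps$-bridge as terminating at the \emph{closest} qualifying gap, together with $\tau(K,u_\eps) \ge \tau(K) > L + \eps$; perfectness of $K$ is used only to note that two distinct gaps cannot share an endpoint. Everything else is a routine unwinding of the definitions.
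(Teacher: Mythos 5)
Your proof is correct. The paper offers no argument here --- it records the proposition as ``easily verifiable'' --- so there is nothing to compare against, but your write-up is a complete and valid verification. Part (i) is the immediate monotonicity of the set of qualifying gaps in $\eps$, exactly as you say, and it also yields $\tau_\eps(K)\le\tau(K)$. The substance is in part (ii), and your key idea is the right one: if the $\eps$-bridge $(u_\eps,a_\eps)$ at $u_\eps$ is short relative to $|G_\eps|$, then looking from the \emph{other} endpoint $a_\eps$ of that bridge, the ordinary leftward bridge at $a_\eps$ is forced to be exactly $(u_\eps,a_\eps)$ (since every intervening gap has length $\le(1-\eps)g_\eps<b_\eps-a_\eps$ by minimality in the definition of the $\eps$-bridge, while $g_\eps>b_\eps-a_\eps$ because $(a_\eps,b_\eps)$ lies strictly inside the ordinary bridge $B(u_\eps)$), giving $\tau(K,a_\eps)<(L+\eps)/(1-\eps)$ and hence a contradiction with $\tau(K)=\inf_v\tau(K,v)$ once $\eps$ is small. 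Your remarks on the corner cases (perfectness ruling out shared gap endpoints, boundedness of $(a_\eps,b_\eps)$) are the right ones, and the bookkeeping all checks out.
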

With these definitions in place, we can prove that the image of a thick Cantor set must at least contain a thick Cantor set.  More precisely, we have the following lemma, which is essentially Lemma 3.8 in \cite{STinterior}.  We include a proof here for completeness.
\begin{lem}[Thickness of the image is nearly preserved]
\label{imagethickness}
Let $K\subset\R$ be a Cantor set, let $u$ be a right endpoint of some gap of $K$, and let $g$ be a function which is continuously differentiable on a neighborhood of $u$ and satisfies $g'(u)\neq 0$.  For every $\eps>0$, there exists $\delta>0$ such that
\[
\tau(g(K\cap [u,u+\delta]))>\tau_\eps(K)(1-\eps).
\]

\end{lem}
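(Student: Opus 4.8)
# Proof Proposal for Lemma \ref{imagethickness}

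The plan is to choose $\delta$ so small that $g$ is essentially linear on $[u,u+\delta]$, and then compare the thickness of $g(K\cap[u,u+\delta])$ with the $\epsilon$-thickness of $K$ one gap endpoint at a time. First I would fix $\eta>0$ with $(1+\eta)(1-\epsilon)<1$ and use the continuity of $g'$ and $g'(u)\neq 0$ to find $\delta_0>0$ so that on $[u,u+\delta_0]$ the function $g$ is $C^1$, $g'$ is bounded away from $0$, and $M:=\sup_{[u,u+\delta_0]}|g'|\le (1+\eta)\inf_{[u,u+\delta_0]}|g'|=:(1+\eta)m$. On such an interval $g$ is a strictly monotone homeomorphism, so it carries the Cantor set $K\cap[u,u+\delta_0]$ to a Cantor set and sets up a bijection between the bounded gaps (and bridges) of the two sets; and by the mean value theorem the image of any interval $J\subset[u,u+\delta_0]$ has length between $m|J|$ and $M|J|$.

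Next I would make a good choice of $\delta\in(0,\delta_0]$, first shrinking $\delta_0$ if necessary so that also $\delta_0<|G|/(1-\epsilon)$, where $G$ is the gap of $K$ with right endpoint $u$. Since $u$ is a gap endpoint of $K$ and $K$ is totally disconnected, bounded gaps of $K$ accumulate at $u$ from the right, so I can pick a gap $\widehat{G}=(u',b)$ of $K$ with $u<u'<u+\delta_0$ whose length is within a factor $(1-\epsilon')$, for some $\epsilon'<\epsilon$, of $\sup\{|G''|:G''\text{ a gap of }K,\ G''\subseteq(u,u+\delta_0)\}$; this guarantees $|\widehat{G}|>(1-\epsilon)|G'|$ for every gap $G'$ of $K$ contained in $(u,u')$. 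Set $\delta:=u'-u$, so $K\cap[u,u+\delta]=K\cap[u,u']$ has convex hull $[u,u']$, and since $K$ is perfect neither $u$ nor $u'$ is the endpoint of a bounded gap of $K\cap[u,u']$. Hence the only gap endpoints of $K\cap[u,u']$, and therefore of $g(K\cap[u,u'])$, are the endpoints $v$ of gaps $G'$ of $K$ with $G'\subseteq(u,u')$.

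Fix such a $v$, say the right endpoint of $G'$, and let $B_\epsilon(v)=(v,a_\epsilon)$ be its $\epsilon$-bridge in $K$, which reaches a gap of length exceeding $(1-\epsilon)|G'|$. Since $\widehat{G}$ lies to the right of $v$ with $|\widehat{G}|>(1-\epsilon)|G'|$, the gap $\widehat{G}$ is an admissible stopping gap, so $a_\epsilon\le u'$ and $(v,a_\epsilon)\subseteq(u,u')$. Every gap of $K$ strictly inside $(v,a_\epsilon)$ has length $\le(1-\epsilon)|G'|$, so its image under $g$ has length $\le M(1-\epsilon)|G'|<m|G'|\le|g(G')|$ by the choice of $\eta$; thus no such image gap can be a stopping gap for $g(v)$ in $g(K\cap[u,u'])$, so the bridge of $g(v)$ contains $g\big((v,a_\epsilon)\big)$, of length $\ge m\,|B_\epsilon(v)|\ge m\,\tau_\epsilon(K)|G'|$, while the gap at $g(v)$ has length $\le M|G'|$. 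Hence $\tau\big(g(K\cap[u,u']),g(v)\big)\ge \tau_\epsilon(K)\,m/M\ge\tau_\epsilon(K)/(1+\eta)>\tau_\epsilon(K)(1-\epsilon)$. The case where $v$ is a left endpoint is symmetric, using the gap $G$ at $u$ together with $\delta_0<|G|/(1-\epsilon)$ to see the left $\epsilon$-bridge of $v$ does not overshoot $u$. Taking the infimum over all gap endpoints $g(v)$ gives $\tau\big(g(K\cap[u,u+\delta])\big)>\tau_\epsilon(K)(1-\epsilon)$.

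I expect the main obstacle to be the bookkeeping around truncated bridges near the endpoints $u$ and $u'$: an ordinary bridge of $K$ at an interior gap endpoint may stick out of $[u,u+\delta]$, and one must verify that the $\epsilon$-bridge still fits inside — this is exactly where the relaxation from $\tau$ to $\tau_\epsilon$, and the careful choice of $\delta$ so that the flanking gaps $G$ and $\widehat{G}$ dominate the relevant scales, are essential. Combining this with the mild distortion of $g$, which costs the further factor $1/(1+\eta)$ absorbed into $1-\epsilon$, yields the stated inequality; this is essentially the argument of Lemma 3.8 in \cite{STinterior}.
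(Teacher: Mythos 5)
Your proposal is correct and follows essentially the same route as the paper's proof: localize to an interval on which $g'$ has small oscillation, use the mean value theorem to compare lengths of image gaps and bridges, and observe that every gap inside an $\epsilon$-bridge maps to a gap too short to terminate the corresponding bridge of the image. The one genuine refinement is your choice of the right endpoint $u'=u+\delta$ as the left endpoint of a gap $\widehat{G}$ dominating all gaps of $(u,u')$ (and the condition $\delta_0<|G|/(1-\epsilon)$ on the left), which cleanly disposes of the truncated-bridge issue at the boundary — a point the paper's proof passes over when it replaces $\tau_\epsilon(K\cap[u,u+\delta],v)$ by $\tau_\epsilon(K)$ in the final infimum.
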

\begin{proof}
Fix $\eps>0$.  By continuity of $g'$, we may choose $\delta$ such that for all $x_1,x_2\in[u,u+\delta]$ we have 
\[
\left|\frac{g'(x_1)}{g'(x_2)}-1\right|<\eps.
\]
Note that our choice of $\delta$ guarantees that $g$ is monotone on the interval $[u,u+\delta]$, so for any subinterval $I$ the mean value theorem guarantees the existence of some $x_I\in I$ such that $|g(I)|=|I|\cdot |g'(x_I)|$.  Let $v$ be the endpoint of some gap $G$ in $K\cap [u,u+\delta]$.  We first observe $g(B_\eps(v))\subset B_{\eps^2}(g(v))$.  To prove this, note that any gap in $g(K\cap [u,u+\delta])$ is the image of a gap in $K\cap [u,u+\delta]$.  Therefore, it suffices to prove that any gap $H\subset B_\eps(v)$ satisfies $g(H)<(1-\eps^2)g(G)$.  We have
\begin{align*}
|g(H)|&=|H|\cdot |g'(x_H)| \\
&<(1-\eps)|G|\cdot\frac{|g'(x_H)|}{|g'(x_G)|}\cdot |g'(x_G)| \\
&<(1-\eps)|G|\cdot (1+\eps)\cdot |g'(x_G)| \\
&=(1-\eps^2)|g(G)|.
\end{align*}
It follows that the thickness of our image at the point $g(v)$ satisfies
\begin{align*}
\tau_{\eps^2}(g(K\cap[u,u+\delta]),g(v)) &\geq \frac{|g(B_\eps(v))|}{|g(G)|}\\
&=\frac{|B_\eps(v)|}{|G|}\cdot \left|\frac{g'(x_{B_\eps(v)})}{g'(x_G)}\right| \\
&\geq \tau_\eps(K\cap[u,u+\delta],v)\cdot(1-\eps).
\end{align*}
Taking the infimum over $v$, we have
\[
\tau(g(K\cap[u,u+\delta]))>\tau_{\eps^2}(g(K\cap[u,u+\delta]))\geq \tau_\eps(K)(1-\eps).
\]
This is the first claim in the statement of the theorem.  The second then follows from Proposition \ref{basics} and the assumption that $\tau(K)>1$.
\end{proof}

We are now prepared to state and prove the key lemma for distances. 

\begin{lem}[Pin Wiggling for Distances]
\label{distanceskeylemma}
Let $K_1,K_2$ be Cantor sets satisfying $\tau(K_1)\cdot \tau(K_2) > 1$.  For any $x^0\in \R^2$, there exists an open $S$ about $x^0$ so that
$$\bigcap_{x\in S} \Delta_{x}(K_1\times K_2)$$
has non-empty interior.  
\end{lem}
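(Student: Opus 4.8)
The plan is to reduce the problem to the Newhouse Gap Lemma (in its linked form, Proposition \ref{specialcase}) applied to the Cantor set $K_2$ and a graph $g_{x,t}(K_1)$, where $g_{x,t}(z) = x_2 + \sqrt{t^2-(z-x_1)^2}$. First I would fix a point $x^0 = (x^0_1,x^0_2) \in \R^2$ and choose, using the structure of $K_1, K_2$, small closed intervals $I_1 \subset \R$ and $I_2 \subset \R$ with $I_j$ meeting $K_j$ in a Cantor subset, and positioned so that (a) the relevant branch of the circle arc $z \mapsto g_{x,t}(z)$ is smooth and has nonvanishing derivative on a neighborhood of $I_1$ for all $(x,t)$ in a suitable range, i.e. $I_1$ stays away from the vertical tangents $|z - x_1| = t$, and (b) for a suitable target distance $t^0$ the image $g_{x^0,t^0}(I_1 \cap K_1)$ is linked with $I_2 \cap K_2$. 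To arrange (b) I would pick $t^0$ so that the arc passes through the convex hull of $I_2\cap K_2$ at an interior height, transversally, so that the convex hulls of the two one-dimensional sets genuinely overlap without nesting.

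The key step is applying Lemma \ref{imagethickness}: since $g'_{x,t}$ is bounded away from zero on a neighborhood of $I_1$ (uniformly for $(x,t)$ near $(x^0,t^0)$), and since $\tau(K_1)\cdot\tau(K_2) > 1$ gives us a little room, I would choose $\eps>0$ small enough that $(1-\eps)\tau_\eps(K_1) \cdot \tau(K_2) \ge 1$ — possible by Proposition \ref{basics}(ii) — and then shrink $I_1$ (invoking Lemma \ref{imagethickness} at an endpoint $u$ of a gap of $K_1$ inside the region of interest) so that
\[
\tau\bigl(g_{x,t}(K_1 \cap I_1)\bigr) > (1-\eps)\tau_\eps(K_1)
\]
for all $(x,t)$ in the chosen neighborhood. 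Then $\tau\bigl(g_{x,t}(K_1\cap I_1)\bigr)\cdot \tau(K_2 \cap I_2) \ge \tau\bigl(g_{x,t}(K_1\cap I_1)\bigr)\cdot\tau(K_2) \ge 1$, so Proposition \ref{specialcase} yields $g_{x,t}(K_1\cap I_1) \cap (K_2\cap I_2) \ne \emp$ whenever the two sets are linked, which means $t \in \Delta_x(K_1\times K_2)$.

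Finally I would convert "linked" into an open condition. For fixed $z$, the quantity $g_{x,t}(z)$ is jointly continuous in $(x,t)$ on the region where the square root is defined and positive; hence the endpoints of the convex hull of $g_{x,t}(K_1\cap I_1)$ move continuously with $(x,t)$, while the convex hull of $K_2 \cap I_2$ is fixed. Therefore the linking condition at $(x^0, t^0)$ — an open, transversal overlap of convex hulls — persists for $(x,t)$ in some product neighborhood $S \times J$, with $J$ a nondegenerate open interval of distances and $S$ an open neighborhood of $x^0$. This gives $J \subset \Delta_x(K_1\times K_2)$ for every $x \in S$, i.e. $\bigcap_{x\in S}\Delta_x(K_1\times K_2) \supset J$ has non-empty interior. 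The main obstacle I anticipate is the bookkeeping in the first paragraph: one must simultaneously keep $I_1$ away from the vertical-tangent locus $|z-x_1|=t$ (so that $g'_{x,t}$ stays bounded away from zero with a uniform constant over the whole neighborhood), keep the arc height hitting $I_2$ transversally, and keep $I_1$ small enough for Lemma \ref{imagethickness} to apply — all uniformly in the perturbed pin. A clean way to handle this is to first fix $x^0$, choose $t^0$ and the intervals for that single pin, verify all conditions are open and transversal there, and only then take the neighborhood $S$ small enough that every condition survives; compactness of $\overline{S}$ makes the "uniform in $(x,t)$" claims routine.
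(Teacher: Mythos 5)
Your proposal follows essentially the same route as the paper's proof: restrict to small Cantor pieces at gap endpoints, use Lemma \ref{imagethickness} (with the $\eps$-thickness slack afforded by $\tau(K_1)\cdot\tau(K_2)>1$) to guarantee the image $g_{x,t}(K_1\cap I_1)$ is still thick enough, apply the linked form of the Gap Lemma, and observe that linking is an open condition in $(x,t)$, yielding a product neighborhood $S\times J$. The one slip is the inequality $\tau(K_2\cap I_2)\ge \tau(K_2)$, which is false in general because truncating at the edge of $I_2$ can shorten bridges; the fix is to apply the same $\eps$-thickness estimate to $K_2\cap I_2$ as well (taking $(1-\eps)^2\tau_\eps(K_1)\tau_\eps(K_2)\ge 1$), which is exactly how the paper arranges $\tau(\widetilde{K_2})\cdot\tau(g(\widetilde{K_1}))>1$.
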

\begin{proof}
For $(x,t)\in\R^2\times (0,\infty)$, define
\begin{equation}\label{baby_g}
g_{x,t}(z)=x_2+\sqrt{t^2-(z-x_1)^2},
\end{equation}

and observe that $t\in\Delta_{x}(K_1\times K_2)$ provided $K_2\cap g_{x,t}(K_1)\neq\emp$.  Let $u_j$ be a right endpoint of a bounded gap of $K_j$, and without loss of generality assume $u_j>x_j^0$, where $x^0 = (x^0_1, x^0_2)$.  We choose small subsets $\widetilde{K_j}\subset K_j$ with left endpoint $u_j$, and focus on the box $\widetilde{K_1}\times \widetilde{K_2}$  (Figure \ref{box}).  Let $t_0=|x^0-(u_1,u_2)|$.  Let $\widetilde{K_j}=K_j\cap[u_j,u_j+\delta_j]$ for some small $\delta_1,\delta_2$.  In particular, we choose $\delta_j>0$ small enough that $\tau(\widetilde{K_2})\cdot \tau(g(\widetilde{K_1}))>1$ (this is possible by Lemma \ref{imagethickness}), and so that $\widetilde{K_1}$ is in the domain of $g_{x,t}$ whenever $(x,t)$ is sufficiently close to $(x^0,t_0)$.  We will also assume $u_j+\delta_j\in K_j$.
\begin{figure}[ht]
\centering
\begin{minipage}[b]{0.45\linewidth}
\begin{tikzpicture}
\begin{axis}[xmin=0,xmax=5,ymin=0,ymax=5,axis x line=center, axis y line=center, xtick={2,3,4}, xticklabels={$x_1^0$,$u_1$,$u_1+\delta_1$}, ytick={1,3,4},yticklabels={$x_2^0$,$u_2$,$u_2+\delta_2$}]

\addplot[mark=*] coordinates {(2,1)};

\addplot[dashed] coordinates {(3,0)(3,5)};
\addplot[dashed] coordinates {(4,0)(4,5)};
\addplot[dashed] coordinates {(0,3)(5,3)};
\addplot[dashed] coordinates {(0,4)(5,4)};

\addplot[thick] coordinates {(3,3)(4,3)(4,4)(3,4)(3,3)};

\end{axis}

\end{tikzpicture}
\caption{The box containing $\widetilde{K_1}\times \widetilde{K_2}$}
\label{box}
\end{minipage}
\qquad
\begin{minipage}[b]{0.45\linewidth}
\begin{tikzpicture}
\begin{axis}[xmin=0,xmax=5,ymin=0,ymax=5,axis x line=center, axis y line=center, xtick={2,3,4}, xticklabels={$x_1^0$,$u_1$,$u_1+\delta_1$}, ytick={1,3,4},yticklabels={$x_2^0$,$u_2$,$u_2+\delta_2$}]

\addplot[domain=0:5,samples=100,red] {1+sqrt(5-(x-2)^2)};
\addlegendentry{$g_{x^0,t_0}$}

\addplot[domain=0:5,samples=100,blue] {1+sqrt(5.56-(x-2)^2)};
\addlegendentry{$g_{x^0,t}$}

\addplot[dashed] coordinates {(3,0)(3,3.13)};
\addplot[dashed] coordinates {(4,0)(4,2.25)};

\addplot[dashed] coordinates {(0,2.25)(4,2.25)};
\addplot[dashed] coordinates {(0,3.13)(3,3.13)};

\addplot[mark=*] coordinates {(2,1)};

\addplot[] coordinates {(3,3)(4,3)(4,4)(3,4)(3,3)};

\end{axis}

\end{tikzpicture}
\caption{Graphs of $g$}
\label{graphs}
\end{minipage}
\end{figure}

By the Newhouse Gap Lemma (Proposition \ref{specialcase}), we will have $\widetilde{K_2}\cap g_{x,t}(\widetilde{K_1})\neq\emp$ whenever the parameters $(x,t)$ are such that $\widetilde{K_2}$ and $g_{x,t}(\widetilde{K_1})$ are linked.  To this end, consider the set
\[
U=\{(x,t)\in\R^2\times\R:g_{x,t}(u_1+\delta_1)<u_2<g_{x,t}(u_1)<u_2+\delta_2\}.
\]
By construction, for any $(x,t)\in U$ we have $K_2$ and $g_{x,t}(K_1)$ linked (Figure \ref{graphs}) and hence $t\in\Delta_x(K_1\times K_2)$.  We claim that $U$ is an open set containing a point of the form $(x^0,t)$ for some $t$.  Lemma \ref{distanceskeylemma} follows from this claim, as we can then take open neighborhoods $S,T$ of $x^0,t$ respectively such that 
\[
T\subset \bigcap_{x\in S} \Delta_{x}(K_1\times K_2).
\]
To prove the claim, we first observe that $U$ is open, since for fixed $z$ the quantity $g_{x,t}(z)$ is a continuous function of $(x,t)$.  To finish the proof, we must find a $t$ such that $(x^0,t)\in U$.  By construction, we have $g_{x^0,t_0}(u_1)=u_2$.  Since the quantity $g_{x,t}(z)$ is strictly increasing in $t$, for any $t>t_0$ we will have $g_{x^0,t}(u_1)>u_2$.  On the other hand, by continuity in $t$ we will also have $g_{x^0,t}(u_1+\delta_1)<u_2$ and $g_{x^0,t}(u_1)<u_2+\delta_2$ whenever $t$ is sufficiently close to $t_0$.  Therefore, we can choose $t$ with the property that $(x^0,t_0)\in U$.

\end{proof}

Theorem \ref{distancetrees} follows from Lemma \ref{distanceskeylemma} and Theorem \ref{mechanism}.  Note that when we apply Theorem \ref{mechanism}, we can start with any skeleton $x^1,\dots,x^{k+1}\in K_1\times K_2$ provided no two points $x^i,x^j$ share a coordinate.

%%%%%%%%%%%%%%%%%%%%%%%%%%%%%%%%%%%%%%%%%%%%%%%%%%
\subsection{\textit{Proof of Theorem \ref{phi_distancetrees}}}\label{general_proof}
As in the previous two sections, Theorem \ref{phi_distancetrees} on $\phi$ distance trees
is an immediate consequence of Theorem \ref{mechanism} and the following lemma.  
\begin{lem}[Pin Wiggling for $\phi$ distance trees]
\label{phi_distanceskeylemma}
Let $K_1,K_2$ be Cantor sets satisfying $\tau(K_1)\cdot \tau(K_2) > 1$
Suppose $\phi:\R^2 \times \R^2 \rightarrow \R$ %satisfies the derivative condition on $A\times B$ , 
satisfies the derivative condition of Definition \ref{deriv_cond} on $A\times B$ for open $A,B\subset \R^2$, each of which intersects $K_1\times K_2$. 
For any $x^0 \in A$, 
there exists an open $S$ about $x^0$ so that
$$\bigcap_{x\in S} \Delta_{\phi, x}(K_1\times K_2)$$
has non-empty interior, 
where $\Delta_{\phi, x}(K_1\times K_2) = \{ \phi(x,y): y \in K_1\times K_2\}.$
\end{lem}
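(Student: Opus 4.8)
The plan is to follow the blueprint already established in the proof of Lemma \ref{distanceskeylemma}, replacing the explicit distance graph $g_{x,t}(z) = x_2 + \sqrt{t^2-(z-x_1)^2}$ by an implicitly defined graph coming from $\phi$. Concretely, since $\phi$ satisfies the derivative condition on $A \times B$, for fixed $x \in A$ the map $\varphi_x(y) = \phi(x,y)$ has nonvanishing partial derivative in $y_2$ on $B$; by the implicit function theorem we may locally solve $\phi(x,(z,w)) = t$ for $w = g_{x,t}(z)$, a function that is $C^1$ jointly in $(x,t,z)$ near a chosen base point, and strictly monotone in $t$ because $\partial_t$ of the defining relation is controlled by the nonvanishing $\partial_{y_2}\phi$. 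Thus $t \in \Delta_{\phi,x}(K_1 \times K_2)$ whenever $K_2 \cap g_{x,t}(K_1) \neq \emptyset$, exactly as before.

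First I would fix a right endpoint $u_j$ of a bounded gap of $K_j$ with $(u_1,u_2)$ lying in $A \times B$ near $x^0$ (possible since $A, B$ each meet $K_1 \times K_2$ and Cantor sets have gap endpoints arbitrarily close to any of their points, combined with the freedom to choose $x^0 \in A$; here one uses that $B$ is open so a small product neighborhood of $(u_1,u_2)$ stays in $B$). Set $t_0 = \phi(x^0,(u_1,u_2))$ and pass to small pieces $\widetilde{K_j} = K_j \cap [u_j, u_j+\delta_j]$ with $u_j + \delta_j \in K_j$. Next I would invoke Lemma \ref{imagethickness}: since $g_{x^0,t_0}$ is $C^1$ with nonzero derivative at $u_1$ (the derivative in $z$ is $-\partial_{y_1}\phi/\partial_{y_2}\phi$, nonzero by the derivative condition), for $\delta_1$ small enough $\tau(g_{x^0,t_0}(\widetilde{K_1}))$ exceeds $\tau_\epsilon(K_1)(1-\epsilon)$, and shrinking further we arrange $\tau(\widetilde{K_2}) \cdot \tau(g_{x,t}(\widetilde{K_1})) > 1$ for $(x,t)$ near $(x^0,t_0)$, using $\tau(K_1)\tau(K_2) > 1$ and continuity.

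Then I would define the open set
\[
U = \{(x,t) : g_{x,t}(u_1+\delta_1) < u_2 < g_{x,t}(u_1) < u_2 + \delta_2\},
\]
which is open by joint continuity of $g_{x,t}(z)$ in $(x,t)$, and on which $\widetilde{K_2}$ and $g_{x,t}(\widetilde{K_1})$ are linked, so Proposition \ref{specialcase} gives $t \in \Delta_{\phi,x}(K_1 \times K_2)$. To produce a point $(x^0,t) \in U$ I would use strict monotonicity of $g_{x^0,t}(u_1)$ in $t$ — which makes $g_{x^0,t}(u_1) > u_2$ for $t$ slightly past $t_0$ — together with continuity in $t$ to keep $g_{x^0,t}(u_1+\delta_1) < u_2$ and $g_{x^0,t}(u_1) < u_2 + \delta_2$; one subtlety is the direction of monotonicity (increasing vs.\ decreasing), which depends on signs of the partials of $\phi$, but in either case one picks $t$ on the appropriate side of $t_0$, and if necessary interchanges the roles of left/right gap endpoints $u_j$ to orient the inequalities correctly. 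Finally, taking product neighborhoods $S \times T$ of such an $(x^0,t)$ inside $U$ yields $T \subset \bigcap_{x \in S}\Delta_{\phi,x}(K_1 \times K_2)$.

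The main obstacle I expect is bookkeeping around the implicit function theorem: one must check that $g_{x,t}$ is defined on a neighborhood of $[u_1, u_1+\delta_1]$ uniformly for $(x,t)$ near $(x^0,t_0)$, that it is $C^1$ with the claimed monotonicity in $t$, and that the linking inequalities can actually be oriented — i.e.\ that the geometry of the level set of $\varphi_{x^0}$ passing through $(u_1,u_2)$ is compatible with $\widetilde{K_1} \times \widetilde{K_2}$ sitting on the correct side. None of this is deep given the derivative condition (which precisely guarantees the relevant partials stay bounded away from zero, hence $g_{x,t}$ has controlled, nonvanishing slope), but it is where care is needed; the thickness-preservation input is black-boxed via Lemma \ref{imagethickness} and the combinatorial-to-tree passage via Theorem \ref{mechanism}.
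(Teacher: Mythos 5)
Your proposal is correct and follows essentially the same route as the paper: use the derivative condition plus the implicit function theorem to produce a jointly $C^1$ graph $g_{x,t}$ with nonvanishing $z$-derivative, then rerun the Euclidean-distance argument (Lemma \ref{imagethickness}, linking, the Newhouse Gap Lemma, and openness of the linking condition in $(x,t)$) verbatim with this new $g_{x,t}$. You are in fact slightly more explicit than the paper about the orientation/monotonicity-in-$t$ bookkeeping, which the paper delegates to the phrase ``the proof proceeds as in the proof of Lemma \ref{distanceskeylemma}.''
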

The strategy for establishing Lemma \ref{phi_distanceskeylemma} is as follows:
Given a pin $x\in \R^2$ and distance $t\in \R$, we note that $t\in \Delta_{\phi,x}(K_1\times K_2)$ whenever 
$\phi(x,y) = t$ for some $y=(y_1,y_2)\in K_1\times K_2$.  
We then use the implicit function theorem to solve for $y_2$ in terms of $(x, y_1,t)$ and call the resulting function $g$.  
Observing that $g$ behaves like the function $g_{x,t}$ introduced in the proof of Lemma \ref{distanceskeylemma} from the previous section, 
the lemma then follows from the exact proof used for Euclidean distances.  The only real effort of the proof then is setting up the implicit function theorem.  
\\

%Applying the IFT
\begin{proof}
Let $\varphi_x(y) = \phi(x,y)$ as in Definition \ref{deriv_cond} so that for all $(x,y) \in A\times B$, 
\begin{equation}\label{deriv_varphi}
\frac{\partial (\varphi_x)}{\partial y_i} (y)\neq 0  \text{ for } i =1,2.
\end{equation}
Define $F:\R^2\times  \R^2 \times \R \rightarrow \R$ by 
\begin{equation}\label{Feq} 
F(x, y,t) = \phi(x,y) -t.
\end{equation}
Then $F$ is continuously differentiable on $A\times B\times \R$.  Moreover, it follows by \eqref{deriv_varphi} that the partial derivative of $F$ in $y_2$  is non-vanishing on $A\times B\times \R$.  
Choose 
$x^0 \in A$ and $u=(u_1,u_2) \in B\cap  (K_1 \times K_2)$. Set
$$t_0 = \phi( x^0, u)$$
so that
 $$F(x^0,u, t_0) = 0.$$

By the implicit function theorem, 
there exists a real-valued function $g$ and a $\de$-ball about the point $(x^0, u_1, t_0)$, denoted by 
$B_\de= B_{\de}(x^0, u_1, t_0)$, so that 
\begin{itemize}
\item $g$ is continuously differentiable on $B_{\de}$;
\item $g(x^0,u_1, t_0) = u_2$;
\item if $(x, y_1, t) \in B_{\de}$, then $(x, y) \in A\times B$ for all $y= (y_1, g(x,y_1, t) )$; 
\item $\phi(x, y) = t$ for all $y= (y_1, g(x,y_1, t) ) $ when $(x, y_1, t) \in B_{\de}$.
\end{itemize}
Moreover, if $g_{x,t}(y_1) = g(x,y_1, t)$, then 
$$g_{x,t}' (y_1) = 
 	 -\left( \frac{\partial (\varphi_x)}{\partial y_1}(y_1, g(x, y_1, t))\right) / 
 	 \left( \frac{\partial (\varphi_x)}{\partial y_2}(y_1, g(x, y_1, t))\right) \neq 0 
$$
for $(x, y_1, t) \in B_\de= B_{\de}(x^0,u_1, t_0)$.  
In other words, there is a neighborhood of $(x^0, t_0) $ so that the derivative of $g_{x,t}$ is non-vanishing in a neighborhood of $u_1$.

Replacing the function in \eqref{baby_g} of Lemma \ref{distanceskeylemma} with this new $g_{x,t}$, the proof proceeds as in the proof of Lemma \ref{distanceskeylemma}.  
\end{proof}

%%%%%%%%%%%%%%%%%%%%%%%%%%%%%%%%%%%%%%%%%%%%%%%%%%%%%%%

\subsection{\textit{Distance Trees in the Middle Thirds Cantor Set}}
In this section, we prove Theorem \ref{trees_thrm_middle_third}.  
The new twist in this context is that the middle thirds Cantor set has thickness equal to 1, not strictly greater than 1.  
Our method uses Lemma \ref{imagethickness} which controls how much smaller the thickness of $g(\widetilde{K})$ can be compared to $K$, where $\widetilde{K}$ is a choosen subset of $K$.  
If $\tau(K)=1$, then we cannot assume $\tau(g(\widetilde{K}))\geq 1$ and therefore cannot apply the Newhouse Gap Lemma directly.  However, the self similarity of the middle thirds Cantor set provides a tool that allows one to adapt the proof of the Newhouse Gap Lemma in this special setting.  This observation was used in \cite{STinterior} to prove that the pinned distance set $\Delta_x(C_{1/3}\times C_{1/3})$ has non-empty interior.  We use this idea to prove a pin wiggling lemma for the middle thirds Cantor set. \\

 Before proceeding, we introduce some terminology.  Let $C_{1/3}$ denote the standard middle thirds Cantor set in the interval $[0,1]$.  The standard construction of this set is given by defining $C_{1/3}$ as the intersection of a family of sets $\{C_n\}$, where each $C_n$ is the union of $2^n$ closed intervals of length $1/3^n$.  Define a \textbf{section} of $C_{1/3}$ to be the intersection of $C_{1/3}$ with any one of the intervals making up any of the sets $C_n$. 
\begin{lem}
\label{CantorSetImage}
Let $K_1,K_2$ be sections of the standard middle thirds Cantor $\mathcal{C}$, and let $g$ be a continuously differentiable monotone function which satisfies $1<g'<3$ on the convex hull of $K_1$.  If $K_2$ and $g(K_1)$ are linked, then $K_2\cap g(K_1)\neq \emp$.
\end{lem}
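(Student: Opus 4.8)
The plan is to adapt the classical proof of the Newhouse Gap Lemma, exploiting the exact self-similarity of $C_{1/3}$ to compensate for the fact that $g$ may reduce thickness below $1$. Recall that a section $K$ of $C_{1/3}$ is, up to an affine rescaling by a power of $3$, a copy of $C_{1/3}$ itself; in particular every section has thickness exactly $1$, and every gap $G$ of a section has an associated bridge on each side of exactly the same length $|G|$ (the two adjacent intervals at the next level of construction). The standard Gap Lemma argument proceeds by a descent: given two linked Cantor sets with $\tau(K_1)\tau(K_2)\ge 1$, one shows that if a gap-endpoint structure fails to already force an intersection point, then one can pass to a pair of smaller linked pieces $K_1', K_2'$ (replacing one of the sets by its intersection with a bridge), and iterate; the nested intersection of convex hulls produces a common point. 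The obstruction to running this verbatim here is that $g(K_1)$ is not a section and its thickness could be as small as, roughly, $\tau(K_1)\cdot \inf(g')/\sup(g') $, which with $1<g'<3$ only gives a lower bound bigger than $1/3$ — not enough for Newhouse on its own.

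First I would set up the descent on the $K_1$ side at the level of sections rather than at the level of abstract bridges. Since $K_1$ is a section, it decomposes as $K_1 = K_1^L \cup K_1^R$, two sections one construction-level finer, separated by the "middle third" gap $G_1$ of $K_1$; similarly $K_2 = K_2^L\cup K_2^R$ with middle gap $G_2$. The key computation is to compare $|g(G_1)|$ with $|g(K_1^L)|$ and $|g(K_1^R)|$: by the mean value theorem $|g(G_1)| = |G_1|\cdot g'(\xi)$ and $|g(K_1^{L/R})| = |K_1^{L/R}|\cdot g'(\eta)$ with $|G_1| = |K_1^L| = |K_1^R|$ (exact self-similarity of the middle-thirds construction), so the ratio $|g(K_1^{L/R})|/|g(G_1)|$ lies between $\inf(g')/\sup(g') > 1/3$ and $\sup(g')/\inf(g') < 3$. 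The point is that $g(G_1)$ is a genuine gap of $g(K_1)$ flanked on each side by the "bridge-like" pieces $g(K_1^L)$ and $g(K_1^R)$, whose lengths are at least $\tfrac13 |g(G_1)|$ — strictly more than the naive bound — because $|G_1|$ equals the full length of the flanking level-$n$ interval, not merely a bridge fraction of it. Combined with the fact that $\tau(K_2)=1$ exactly and $K_2$ decomposes the same way, this restores the balance needed to run the linking dichotomy.

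The core dichotomy step, carried out exactly as in the proof of the Newhouse Gap Lemma in \cite{PTbook}: since $K_2$ and $g(K_1)$ are linked, look at which set "contains" the relevant boundary point. Either some endpoint of $K_2$ (equivalently, of $g(K_1)$) lies in the convex hull of the other in a way that already places it inside a construction interval of the other set — giving a point of $K_2 \cap g(K_1)$ after finitely many steps — or else the linked boundary point of one set falls in a gap of the other, say a gap $g(G_1)$ of $g(K_1)$ lying inside the convex hull of $K_2$. In that case, because $|g(G_1)|$ is at most $3\,|G_2$-scale of the flanking pieces$|$ and $K_2$'s flanking sections each have length $\ge |g(G_1)|$ (here is where we use that the adjacent section of $K_2$ is, by self-similarity, at least as long as the gap it borders on the relevant scale, after possibly descending one more level on the $K_2$ side), one of the two flanking sections of $K_2$ is linked with a strictly smaller section $K_1'$ of $K_1$. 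We then replace $(K_1,K_2)$ by $(K_1', K_2')$, both still sections, with $g$ still satisfying $1<g'<3$ on the smaller convex hull (monotonicity and the bound are inherited on subintervals), and iterate.

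Finally, the iteration produces a nested decreasing sequence of nonempty compact sets $K_2^{(n)} \cap (\text{convex hull of } g(K_1^{(n)}))$ whose diameters shrink geometrically (each descent drops at least one construction level, so the convex hulls shrink by a factor bounded below by something like $1/3\cdot \inf(g')$), hence the intersection is a single point lying in $\overline{K_2}\cap \overline{g(K_1)} = K_2 \cap g(K_1)$. The step I expect to be the main obstacle is the bookkeeping in the dichotomy showing that when a gap $g(G_1)$ lands inside $K_2$'s hull, the correct flanking section of $K_2$ is genuinely linked with a section of $K_1$ — i.e., controlling the endpoints so that after descending one level on each side the linking property is preserved and the $g'$-distortion (a factor strictly between $1/3$ and $3$) never accumulates to break the comparison; this is exactly where the $1 < g' < 3$ hypothesis is used sharply, and it is why the lemma needs both the section hypothesis and this specific derivative window rather than a general thickness statement.
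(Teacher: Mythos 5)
Your high-level plan is the right one and is the same in spirit as the paper's: exploit that for $C_{1/3}$ every bounded gap has a bridge on each side of exactly its own length, that gap lengths are quantized by powers of $3$, and that $1<g'<3$ distorts intervals by strictly less than one quantization level. But the proposal has a genuine gap exactly at the step you yourself flag as ``the main obstacle'': the dichotomy that keeps the descent alive is asserted, not proved. Your section-level descent hits a containment problem: after subdividing, the hull of the relevant child of $K_2$ may be entirely contained in the hull of a child of $g(K_1)$ (or vice versa), so those children are not linked and you must keep subdividing the larger side; you must then rule out the possibility that the smaller hull lands inside a \emph{gap} of the other set. That is the entire content of the lemma, and the sentence ``this restores the balance needed to run the linking dichotomy'' does not establish it. The bound you do prove --- that the pieces flanking $g(G_1)$ have length at least $\tfrac13|g(G_1)|$ --- is not by itself enough to prevent falling into a gap; one needs a two-sided comparison against the gaps and bridges of $K_2$ as well.

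The paper sidesteps the bookkeeping by descending on \emph{gaps} rather than sections. Assuming $K_2\cap g(K_1)=\emp$, it constructs linked gaps $U_n$ of $K_2$ and $g(V_n)$ of $g(K_1)$ and proves the clean dichotomy: either $|g(B_n^V)|>|U_n|$ or $|B_n^U|>|g(V_n)|$, where $B_n^U,B_n^V$ are the bridges on the linking sides. Indeed, if $|U_n|>|V_n|$ then $|U_n|\ge 3|V_n|>|g(V_n)|$ and $|B_n^U|=|U_n|$; if $|U_n|\le|V_n|$ then $|g(B_n^V)|>|B_n^V|=|V_n|\ge|U_n|$. Whichever inequality holds, the far endpoint of the smaller gap lies in the other set's bridge, hence (by the disjointness assumption) in a strictly smaller gap of the other set, so one of the two linked gaps can always be replaced by a strictly smaller one. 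Since gaps of distinct lengths are disjoint and have bounded total length, some subsequence of gap lengths tends to $0$, forcing $\dist(K_2,g(K_1))=0$ and contradicting the disjointness of these compact sets. To complete your version you would need to supply the analogous quantitative comparison at each containment step of the section descent; as written, the claim that ``one of the two flanking sections of $K_2$ is genuinely linked with a strictly smaller section $K_1'$'' is exactly what remains to be shown.
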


\begin{proof}
Let $I_1,I_2$ denote the convex hulls of $K_1, K_2$, respectively.
  By the mean value theorem, for any interval $J$ there exists $x_J\in J$ such that $|g(J)|=|J|\cdot|g'(x_J)|$.  In particular, if $U$ and $V$ are bounded gaps of $K_1$ and $|U|>|V|$, we have $|U|\geq 3|V|$ and therefore
\begin{align*}
|g(U)|&=|U|\cdot |g'(x_U)| \\
&> 3|V|\cdot 1 \\
&> |g'(x_V)|\cdot |V| \\
&= |g(V)|.
\end{align*}
It follows that the bridges of the images are the images of the bridges.  More precisely, the bridge of gap $g(U)$ in $g(K_1)$ is $g(B)$, where $B$ is the bridge next to the gap $U$. \\

To prove the theorem, we proceed by contradiction and assume $K_2\cap g(K_1)=\emp$.  The strategy of the proof is a variant of the strategy used to prove the original Newhouse Gap Lemma.  We construct sequences of gaps $U_n, V_n$ satisfying the following conditions:
\begin{itemize}
	\item $U_n$ is a bounded gap of $K_2$ and $g(V_n)$ is a bounded gap of $g(K_1)$
	\item $U_n$ and $g(V_n)$ are linked for every $n$
	\item For every $n$, either
	\begin{enumerate}[(a)]
		\item $U_{n+1}=U_n$ and $|g(V_{n+1})|<|g(V_n)|$, or
		\item $V_{n+1}=V_n$ and $|U_{n+1}|<|U_n|$
	\end{enumerate}
\end{itemize}
Thus, at each stage we are replacing one of the gaps $U_n,g(V_n)$ with a strictly smaller one and leaving the other unchanged.  In particular, this means that one of the two gap sequences must have a subsequence which is strictly decreasing in length.  Since gaps of different sizes are automatically disjoint and the total length of the gaps is bounded, we must have either $|U_n|\to 0$ or $|g(V_n)|\to 0$ as $n\to\infty$.  However, because $U_n$ and $g(V_n)$ are linked, the closure of $U_n$ contains points of both $K_2$ and $g(K_1)$, and similarly for $g(V_n)$.  This implies that the distance between $K_2$ and $g(K_1)$ is zero, which contradicts the assumption $K_2\cap g(K_1)= \emp$. \\

We construct our sequences $\{U_n\}$ and $\{V_n\}$ recursively.  First, we construct $U_1$ and $V_1$.  Recall that $I_i$ denotes the convex hull of $K_i$ for $i=1,2$.  Since $I_2$ and $g(I_1)$ are linked by assumption, there is a bounded gap $U_1$ of $K_2$ which is a subset of $I_2\cap g(I_1)$.  Let $u$ be an endpoint of $U_1$.  In particular we have $u\in K_2$, and therefore, by our assumption that $K_2\cap g(K_1)=\emp$, it follows that $u$ is in some bounded gap $g(V_1)$ of $g(K_1)$.  
 \\

\begin{figure}
\begin{tikzpicture}

%Lower line
\draw[ultra thick] (0,0)--(5,0);
\draw[ultra thick, blue] (5,0)--(6,0);
\draw[ultra thick, red] (6,0)--(8,0);
\draw[ultra thick] (8,0)--(12,0);

%Lower endpoints
\draw [fill, blue] (5,0) circle [radius=0.1];
\draw [fill, blue] (6,0) circle [radius=0.1];
\draw [fill,] (8,0) circle [radius=0.1];

%Lower labels
\node [below, blue] at (5.5,0) {$g(B_n^V)$};
\node [below, red] at (7,0) {$g(V_n)$};

%Upper line
\draw[ultra thick] (0,1)--(4,1);
\draw[ultra thick, red] (4,1)--(7,1);
\draw[ultra thick, blue] (7,1)--(10,1);
\draw[ultra thick] (10,1)--(12,1);

%Upper endpoints
\draw [fill, blue] (7,1) circle [radius=0.1];
\draw [fill, blue] (10,1) circle [radius=0.1];
\draw [fill] (4,1) circle [radius=0.1];

%Upper labels
\node [above, red] at (5.5,1) {$U_n$};
\node [above, blue] at (8.5,1) {$B_n^U$};

\end{tikzpicture}
\caption{Construction of gap sequence (gaps are red, bridges are blue)}
\label{gapsequence}
\end{figure}
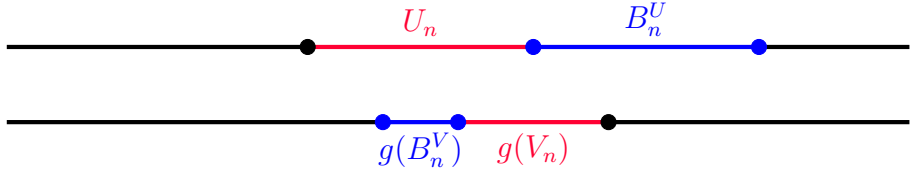

Next, suppose we have constructed $U_n$ and $V_n$ satisfying the properties in the first two bullet points above.  We construct $U_{n+1}$ and $V_{n+1}$ satisfying the third bullet point.
By construction, $U_n$ and $g(V_n)$ are linked.  Let $B_n^U$ be the bridge corresponding to $U_n$ on the same side as $g(V_n)$ (see Figure \ref{gapsequence}), and let $g(B_n^V)$ be the bridge of $g(V_n)$ on the same side as $U_n$.  We claim that one of the following two inequalities must hold:
\begin{align*}
|g(B_n^V)|&>|U_n| \\
|B_n^U|&>|g(V_n)|.
\end{align*}
This follows because we are working with the middle thirds Cantor set, so bounded gaps and corresponding bridges have the same length, i.e., we have $|B_n^U|=|U_n|$ and $|B_n^V|=|V_n|.$
The middle thirds Cantor set also has the property that if $U$ and $V$ are gaps with $|U|>|V|$, we automatically have $|U|\geq 3|V|$.  Finally, by our assumptions on $g$, we have $|J|< |g(J)|< 3|J|$ for every interval $J\subset I_1$.  The claim then follows, as $|U_n|>|V_n|$ implies the second inequality and $|U_n|\leq |V_n|$ implies the first.  \\

Assume the second inequality holds (an analogous argument will apply when the first holds).  This means one endpoint of $g(V_n)$ is in $U_n$ and the other is in $B_n^U$ (see Figure \ref{gapsequence} again).  Recall that we are proving the lemma by contradiction, assuming $K_2\cap g(K_1)=\emp$.  This assumption means the endpoint of $g(V_n)$ which is in $B_n^U$ must be contained in some gap $U_{n+1}$ of $K_2$, and by definition of a bridge we must have $|U_{n+1}|<|U_n|$.  We then take $V_{n+1}=V_n$.  This completes the construction.
\end{proof}

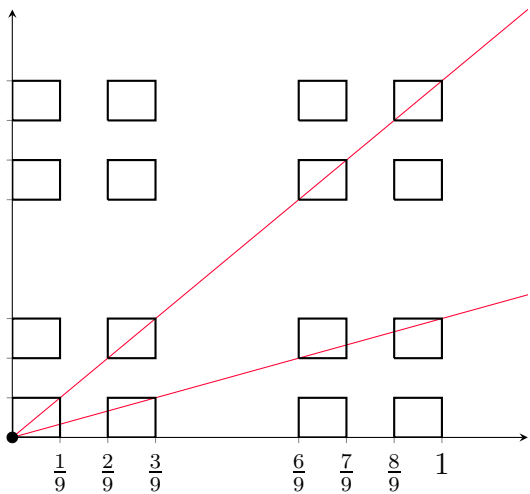
\begin{figure}[ht]
\centering
\begin{tikzpicture}
\begin{axis}[xmin=0,xmax=1.2,ymin=0,ymax=1.2,axis x line=center, axis y line=center, xtick={1/9,2/9,3/9,6/9,7/9,8/9,1}, xticklabels={$\frac{1}{9}$,$\frac{2}{9}$,$\frac{3}{9}$,$\frac{6}{9}$,$\frac{7}{9}$,$\frac{8}{9}$,$1$}, ytick={1/9,2/9,3/9,6/9,7/9,8/9,1},yticklabels={}]

\addplot[mark=*] coordinates {(0,0)};

\addplot[red] coordinates {(0,0)(3,3)};
\addplot[red] coordinates {(0,0)(3,1)};

%Lower left
\addplot[thick] coordinates {(0,0)(0,1/9)(1/9,1/9)(1/9,0)(0,0)};
\addplot[thick] coordinates {(2/9,0)(2/9,1/9)(3/9,1/9)(3/9,0)(2/9,0)};
\addplot[thick] coordinates {(0,2/9)(0,3/9)(1/9,3/9)(1/9,2/9)(0,2/9)};
\addplot[thick] coordinates {(2/9,2/9)(2/9,3/9)(3/9,3/9)(3/9,2/9)(2/9,2/9)};

%Lower right
\addplot[thick] coordinates {(6/9,0)(6/9,1/9)(7/9,1/9)(7/9,0)(6/9,0)};
\addplot[thick] coordinates {(8/9,0)(8/9,1/9)(9/9,1/9)(9/9,0)(8/9,0)};
\addplot[thick] coordinates {(6/9,2/9)(6/9,3/9)(7/9,3/9)(7/9,2/9)(6/9,2/9)};
\addplot[thick] coordinates {(8/9,2/9)(8/9,3/9)(9/9,3/9)(9/9,2/9)(8/9,2/9)};

%Upper left
\addplot[thick] coordinates {(0,6/9)(0,7/9)(1/9,7/9)(1/9,6/9)(0,6/9)};
\addplot[thick] coordinates {(2/9,6/9)(2/9,7/9)(3/9,7/9)(3/9,6/9)(2/9,6/9)};
\addplot[thick] coordinates {(0,8/9)(0,9/9)(1/9,9/9)(1/9,8/9)(0,8/9)};
\addplot[thick] coordinates {(2/9,8/9)(2/9,9/9)(3/9,9/9)(3/9,8/9)(2/9,8/9)};

%Upper right
\addplot[thick] coordinates {(6/9,6/9)(6/9,7/9)(7/9,7/9)(7/9,6/9)(6/9,6/9)};
\addplot[thick] coordinates {(8/9,6/9)(8/9,7/9)(9/9,7/9)(9/9,6/9)(8/9,6/9)};
\addplot[thick] coordinates {(6/9,8/9)(6/9,9/9)(7/9,9/9)(7/9,8/9)(6/9,8/9)};
\addplot[thick] coordinates {(8/9,8/9)(8/9,9/9)(9/9,9/9)(9/9,8/9)(8/9,8/9)};

\end{axis}

\end{tikzpicture}
\caption{The wedge of acceptable boxes}
\label{wedge}
\end{figure}

\begin{thm}
Given a point $x^0\in \R^2$, let $W_{x^0}$ denote the open wedge
\[
W_{x^0}=\{x\in\R^2:(x_2-x_2^0)<(x_1-x_1^0)<3(x_2-x_2^0)\}.
\]
Let $K_1,K_2$ be sections of the middle thirds Cantor set, and suppose $K_1\times K_2\subset W_{x^0}$ (Figure \ref{wedge}).  Then, there exists an open neighborhood $S$ of $x^0$ such that
\[
\bigcap_{x\in S}\Delta_x(K_1\times K_2)
\]
has non-empty interior.
\end{thm}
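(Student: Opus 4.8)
The plan is to follow the proof of Lemma~\ref{distanceskeylemma} essentially verbatim, with the single change that Lemma~\ref{CantorSetImage} replaces the Newhouse Gap Lemma (Proposition~\ref{specialcase}). As there, for $(x,t)\in\R^2\times(0,\infty)$ put
\[
g_{x,t}(z)=x_2+\sqrt{t^2-(z-x_1)^2},
\]
so that $t\in\Delta_x(K_1\times K_2)$ whenever $K_2\cap g_{x,t}(K_1)\neq\emp$, and note
\[
g_{x,t}'(z)=\frac{-(z-x_1)}{\sqrt{t^2-(z-x_1)^2}}=-\cot\theta,
\]
where $\theta$ is the angle at which the point $(z,g_{x,t}(z))$ on the circle of radius $t$ is seen from $x$. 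The wedge $W_{x^0}$ enters precisely because, for a point $(z,w)$ seen from $x^0$ at angle $\theta$, one has $(z,w)\in W_{x^0}\iff\tan\theta\in(1/3,1)\iff1<|\cot\theta|<3$ — exactly the derivative window demanded by Lemma~\ref{CantorSetImage}, whose proof rests on the rigid features of $C_{1/3}$ that bounded gaps differing in size differ by a factor at least $3$ and that each gap carries a bridge of equal length.

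Concretely, I would first pass to sub-sections: since $\Delta_x(\widetilde K_1\times\widetilde K_2)\subset\Delta_x(K_1\times K_2)$ whenever $\widetilde K_j\subset K_j$ are sections, and since such sub-sections of $K_1\times K_2$ still lie in $W_{x^0}$, it suffices to prove the statement after replacing $K_1,K_2$ by sections $\widetilde K_1,\widetilde K_2$ whose convex hulls $[a_1,b_1],[a_2,b_2]$ are as short as we please. Fix such sub-sections, set $u=(a_1,a_2)$ and $t_0=|x^0-u|$, so $g_{x^0,t_0}(a_1)=a_2$ and $t_0^2-(a_1-x_1^0)^2=(a_2-x_2^0)^2>0$. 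Because $u\in W_{x^0}$, the angle $\theta$ at which $u$ is seen from $x^0$ satisfies $\tan\theta\in(1/3,1)$, hence $g_{x^0,t_0}'(a_1)=-\cot\theta\in(-3,-1)$. Since $g_{x,t}'(z)$ depends continuously on $(x,t,z)$ where it is defined, shrinking the sub-sections and taking $(x,t)$ in a small enough neighborhood $N$ of $(x^0,t_0)$ guarantees that $g_{x,t}$ is $C^1$, strictly monotone, and satisfies $-3<g_{x,t}'<-1$ on $[a_1,b_1]$ for every $(x,t)\in N$. Lemma~\ref{CantorSetImage} is invariant under the reflection $z\mapsto-z$ of its target (its proof uses only the reflection-invariant gap/bridge structure of $C_{1/3}$), so applying it to $-\widetilde K_2$ and $(-g_{x,t})(\widetilde K_1)$ shows $\widetilde K_2\cap g_{x,t}(\widetilde K_1)\neq\emp$ as soon as $\widetilde K_2$ and $g_{x,t}(\widetilde K_1)$ are linked and $(x,t)\in N$.

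It then remains to run the linking/openness argument of Lemma~\ref{distanceskeylemma}. I would set
\[
U=\{(x,t):g_{x,t}(b_1)<a_2<g_{x,t}(a_1)<b_2\},
\]
which forces the convex hull of $g_{x,t}(\widetilde K_1)$ to be linked with $[a_2,b_2]$. The set $U$ is open by continuity of $g_{x,t}(z)$ in $(x,t)$, and it contains a point $(x^0,t_*)$: at $t_0$ one has $g_{x^0,t_0}(a_1)=a_2$ and, by monotonicity in $z$, $g_{x^0,t_0}(b_1)<a_2$; since $g_{x,t}(z)$ is strictly increasing in $t$, replacing $t_0$ by a slightly larger $t_*$ makes $a_2<g_{x^0,t_*}(a_1)<b_2$ while continuity preserves $g_{x^0,t_*}(b_1)<a_2$. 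Choosing $t_*$ close enough to $t_0$ so that $(x^0,t_*)\in U\cap N$ and taking an open box $S\times T\subset U\cap N$ about $(x^0,t_*)$, we get that for every $(x,t)\in S\times T$ the sets $\widetilde K_2$ and $g_{x,t}(\widetilde K_1)$ are linked with $g_{x,t}$ in the required derivative window, so Lemma~\ref{CantorSetImage} yields $t\in\Delta_x(\widetilde K_1\times\widetilde K_2)$. Hence $T\subset\bigcap_{x\in S}\Delta_x(\widetilde K_1\times\widetilde K_2)\subset\bigcap_{x\in S}\Delta_x(K_1\times K_2)$, giving the desired non-empty interior.

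The main obstacle is the derivative window itself. In Lemma~\ref{distanceskeylemma} there is slack — $\tau(K_1)\tau(K_2)>1$ strictly — so Lemma~\ref{imagethickness} lets one shrink $\widetilde K_1$ until $\tau(g(\widetilde K_1))\tau(\widetilde K_2)>1$ and invoke Newhouse. Here $\tau(C_{1/3})=1$ exactly, the image thickness $\tau(g(\widetilde K_1))$ can genuinely drop below $1$, and Newhouse is simply unavailable; Lemma~\ref{CantorSetImage} rescues the argument only under the rigid constraint $1<|g_{x,t}'|<3$ coming from the factor-$3$ self-similarity of $C_{1/3}$. This is exactly why the hypothesis must be packaged as $K_1\times K_2\subset W_{x^0}$, and why the bulk of the work is checking that this constraint survives both the passage to a short sub-box and the simultaneous wiggling of the pin $x$ and the radius $t$ — which amounts to the observation that $|g_{x,t}'|$ is governed by the angle subtended at $x$, so that "lying in the wedge" is an open condition stable under small perturbations.
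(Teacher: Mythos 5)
Your proof is correct and follows essentially the same route as the paper: pass to small sections near the lower-left corner, use the wedge condition to pin $|g_{x,t}'|$ in $(1,3)$, replace Newhouse by Lemma \ref{CantorSetImage}, and run the open linking argument of Lemma \ref{distanceskeylemma}. Your explicit handling of the sign of the derivative (the lemma is stated for $1<g'<3$ while $g_{x,t}'\in(-3,-1)$ here) via reflection is a small point the paper glosses over, but it does not change the argument.
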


\begin{proof}
For $x\in\R^2$ and $t\in\R$, consider the function
\[
g_{x,t}(z)=x_2+\sqrt{t^2-(z-x_1)}.
\]
We have
\[
g_{x,t}'(z)=-\frac{z-x_1}{g_{x,t}(z)-x_2},
\]
so $1<|g_{x,t}'(z)|<3$ whenever $(z,g_{x,t}(z))\in W_x$.  For $j=1,2$, let $u_j=\min K_j$, so that $u=(u_1,u_2)$ denotes the lower left corner of $K_1\times K_2$, and let $t_0=|x^0-u|$ (thus, we have $g_{x^0,t_0}(u_1)=u_2$).  For every $\delta>0$, define $\widetilde{K_j}=K_j\cap [u_1,u_1+\delta]$ and consider the set
\[
U_\delta=\{(x,t) :\widetilde{K_2}\text{ and }g_{x,t}(\widetilde{K_1}) \text{ are linked, and }(z,g_{x,t}(z))\in W_x \text{ for all }z\in[u_1,u_1+\delta]\}.
\]
By Lemma \ref{CantorSetImage}, if $(x,t)\in U_\delta$ then $t\in \Delta_x(K_1\times K_2)$.  Clearly $U_\delta$ is open, so as in the previous proofs it suffices to show that $U_\delta$ contains a point of the form $(x^0,t)$.  We first observe that if $\delta$ is sufficiently small, the sets $\widetilde{K_2}$ and $g_{x,t}(\widetilde{K_1})$ are linked for all $t\in(t_0,t_0+\delta)$.  This is because for any $t>t_0$ we have $g_{x^0,t}(u_1)>u_2$, and for any $t<u_1+\delta$ we will also have $g_{x^0,t}(u_1)<u_2+\delta$ and $g_{x^0,t}(u_1+\delta)<u_2$.  Finally, since $u\in W_{x^0}$ by assumption, if $\delta'$ is sufficiently small we will have $(z,g_{x^0,t}(z))\in W_{x^0}$ for all $z\in [u_1,u_1+\delta]$ and all $t\in (t_0,t_0+\delta')$.

\end{proof}

\begin{proof}[Proof of Theorem \ref{trees_thrm_middle_third}]
By Theorem \ref{mechanism}, it suffices to find a skeleton $x^1,...,x^{k+1}\in C_{1/3}\times C_{1/3}$ such that whenever $i<j$ we have $x^j\in W_{x^i}$.  Wedge membership is transitive in the sense that $x^3\in W_{x^2}$ and $x^2\in W_{x^1}$ implies $x^3\in W_{x^1}$, so it suffices to construct our skeleton so that $x^{i+1}\in W_{x^i}$ for every $i$.  We construct such a sequence recursively as follows.  Let $x^1$ be the origin.  One can check that $W_{x^1}$ contains the box $[8/9,1]\times [6/9,7/9]$ (refer again to Figure \ref{wedge}).  Let $x^2=(8/9,6/9)$.  Since $x^2$ is the lower left corner of a similar copy of $C_{1/3}\times C_{1/3}$, one can run the same argument by symmetry and take $x^3$ to be the lower left corner of a box contained in $W_{x^2}$.  This process can be repeated as many times as needed.
\end{proof}

\bibliography{refs}
\bibliographystyle{abbrv}

\end{document}